\newcommand{\textprime}{\textsuperscript{\ensuremath{\prime}}}
\declaretheorem[name=Theorem, numberwithin=section]{thm}
\declaretheorem[name=Lemma,numberlike=thm]{lem}
\declaretheorem[name=Corollary,numberlike=thm]{cor}
\declaretheorem[name=Proposition,numberlike=thm]{prop}
\declaretheorem[name=Definition,numberlike=thm, style=definition]{defi}
\declaretheorem[name=Geometric Setup,numberlike=thm, style=definition]{setup}
\declaretheorem[name=Example, numberlike=thm, style=remark]{ex}
\declaretheorem[name=Remark, numberlike=thm, style=remark]{rem}
\crefname{figure}{Figure}{Figures}
\crefname{table}{Table}{Tables}
\crefname{thm}{Theorem}{Theorems}
\crefname{lem}{Lemma}{Lemmas}
\crefname{defi}{Definition}{Definitions}
\crefname{cor}{Corollary}{Corollaries}
\crefname{prop}{Proposition}{Propositions}
\crefname{ex}{Example}{Examples}
\crefname{rem}{Remark}{Remarks}
\crefname{section}{Section}{Sections}
\crefname{chapter}{Chapter}{Chapters}
\crefname{appendix}{Appendix}{Appendices}
\crefname{conj}{Conjecture}{Conjectures}
\crefname{setup}{Geometric Setup}{Geometric Setup}
\newcommand{\parensup}[1]{\textup{(}#1\textup{)}}
\newcommand{\tens}{\otimes}
\newcommand{\agrtens}{\widehat{\agrtens}}
\newcommand{\field}[1]{\mathbb{#1}}
\newcommand{\N}{\field{N}}
\newcommand{\Q}{\field{Q}}
\newcommand{\R}{\field{R}}
\newcommand{\Rgeq}{\R_{\geq 0}}
\newcommand{\Rleq}{\R_{\leq 0}}
\newcommand{\Cstar}{\mathrm{C}^*}
\newcommand{\cont}{\mathcal{C}}
\newcommand{\contZ}{\cont_0}
\newcommand{\pr}{\operatorname{pr}}
\newcommand{\bd}{\partial}
\newcommand{\bdMV}{\bd_{\mathrm{MV}}}
\newcommand{\id}{\operatorname{id}}
\newcommand{\iso}{\cong}
\newcommand{\Ind}{\operatorname{Ind}}
\newcommand{\roeAlg}{\mathrm{C}^*}
\newcommand{\KTh}{\mathrm{K}}
\newcommand{\KPh}{\ast}
\newcommand{\homlgy}{\mathrm{H}}
\DeclareMathOperator*{\chern}{ch}
\newcommand{\Ahat}{\hat{\mathrm{A}}}
\newcommand{\tangentBdl}{\mathrm{T}}
\newcommand{\diracOp}{\slashed{D}}
\newcommand{\Efree}{\mathrm{E}}
\newcommand{\Bfree}{\mathrm{B}}
\DeclareMathOperator*{\colim}{colim}
\newcommand{\redd}{\mathrm{red}}
\renewcommand{\leq}{\leqslant}
\renewcommand{\geq}{\geqslant}
\newcommand{\asdim}{\operatorname{asdim}}
\newcommand{\lf}{\mathrm{lf}}
\newcommand{\pt}{\mathrm{pt}}
\newcommand{\IndRcptl}{\mathcal{C}}
\newcommand{\PM}{\mathrm{PM}}
\newcommand{\Klf}{\KTh^\lf}
\title{An index obstruction to positive scalar curvature on fiber bundles over aspherical manifolds}
\author{Rudolf Zeidler}
\date{}
\newcommand{\Addresses}{{
  \bigskip
  \footnotesize
  \textsc{Mathematisches Institut, WWU Münster, Einsteinstrasse 62, 48149 Münster, Germany}\par\nopagebreak
  \textit{Email address}: \href{mailto:math@rzeidler.eu}{math@rzeidler.eu}
  }}
\begin{document}
  \maketitle
  \begin{abstract}
     We exhibit geometric situations, where higher indices of the spinor Dirac operator on a spin manifold $N$ are obstructions to positive scalar curvature on an ambient manifold $M$ that contains $N$ as a submanifold.
     In the main result of this note, we show that the Rosenberg index of $N$ is an obstruction to positive scalar curvature on $M$ if $N \hookrightarrow M \twoheadrightarrow B$ is a fiber bundle of spin manifolds with $B$ aspherical and $\pi_1(B)$ of finite asymptotic dimension.
     The proof is based on a new variant of the multi-partitioned manifold index theorem which might be of independent interest.
     Moreover, we present an analogous statement for codimension one submanifolds.
     We also discuss some elementary obstructions using the $\Ahat$-genus of certain submanifolds.
  \end{abstract}
  \section{Introduction}
  We consider the following setup:
    \begin{setup}\label{defi:setup}
    Let $M$ be a closed connected spin $m$-manifold and $N \subseteq M$ a closed connected submanifold of codimension $q$ with trivial normal bundle.
  Moreover, we denote the fundamental groups of $M$ and $N$ by $\Gamma$ and $\Lambda$, respectively, and let $j \colon \Lambda \to \Gamma$ be the map induced by the inclusion $\iota \colon N \hookrightarrow M$.
  \end{setup}
    Hanke--Pape--Schick~\cite{HPS14Codimension} have found that if the codimension $q = 2$ and some assumptions on homotopy groups hold, then the Rosenberg index of $N$ is an obstruction to positive scalar curvature on $M$.
  Motivated by this result, it is an interesting endeavor to find further situations where the Rosenberg index of $N$ is an obstruction to positive scalar curvature on the ambient manifold $M$.
  In this note, we exhibit certain cases where it is possible to relax the restrictions on the codimension.

  Recall the \emph{Rosenberg index} $\alpha^{\Gamma}(M) \in \KTh_\ast(\Cstar_\epsilon \Gamma)$ of a closed spin manifold $M$, where $\Gamma = \pi_1(M)$ and $\Cstar_\epsilon \Gamma$ denotes the maximal ($\epsilon = \max$) or reduced ($\epsilon = \redd$) group $\Cstar$-algebra.
  Abstractly, it is obtained by applying the Baum--Connes assembly map
  \begin{equation*}
    \mu \colon \KTh_\ast(\Bfree \Gamma) \to \KTh_\ast(\Cstar_\epsilon \Gamma),
  \end{equation*}
  to the image of the $\KTh$-homological fundamental class of $M$ under the map $u \colon M \to \Bfree \Gamma$ that classifies the universal covering.
  The \emph{\parensup{maximal, if $\epsilon = \max$} strong Novikov conjecture} predicts that $\mu \tens \Q$ is injective.

  All statements in the introduction are made under implicit assumption of \cref{defi:setup}.
  We start by recalling the precise statement of Hanke--Pape--Schick's codimension two obstruction.

  \begin{thm}[{\cite[Theorem 1.1]{HPS14Codimension}}]\label{thm:HPSCodim2}
    Let $\epsilon \in \{ \redd, \max \}$.
    Let $N$ have codimension $q = 2$ and suppose that $j \colon \Lambda \to \Gamma$ is injective as well as $\pi_2(M) = 0$.
    If $\alpha^\Lambda(N)\neq 0 \in \KTh_{m-2}(\Cstar_\epsilon \Lambda)$, then $M$ does not admit a metric of positive scalar curvature.
  \end{thm}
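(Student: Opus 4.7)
The plan is to reduce the codimension-two problem to the classical partitioned manifold index theorem by constructing an intermediate cover in which a copy of $N$ appears as a genuine hypersurface. First I pass to the covering $\bar M \to M$ corresponding to the subgroup $j(\Lambda) \leq \Gamma$. Since $j$ is injective, the submanifold $N$ lifts diffeomorphically to a copy $\bar N \subseteq \bar M$ whose normal bundle is still trivial, and moreover $\pi_1(\bar M) = \Lambda$ and $\pi_2(\bar M) = \pi_2(M) = 0$.

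The central topological step is to produce a smooth map $\varphi \colon \bar M \to S^1$ whose fiber over some regular value is a single copy of $\bar N$. On the trivialized tubular neighborhood $\bar N \times D^2$ of $\bar N$ I define $\varphi$ to be the angular coordinate of the $D^2$-factor. Extending this angular function across $\bar M \setminus \bar N$ is precisely where $\pi_2(M) = 0$ enters: by obstruction theory the extension problem amounts to controlling a cohomological obstruction valued in a group determined by $\pi_2(\bar M)$, which vanishes under our assumption. Equivalently, one exhibits an integral cohomology class on $\bar M$ that is Poincaré-dual to $\bar N$.

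Pulling back along the universal covering $\R \to S^1$ yields a $\Z$-cover $\hat M \to \bar M$ in which $\bar N$ lifts to a compact, two-sided, codimension-one submanifold partitioning $\hat M$ into two halves. Composing with the universal cover $\tilde M \to \hat M$ (which is a regular $\Lambda$-cover), I would then invoke a $\Lambda$-equivariant higher partitioned-manifold index theorem in the spirit of Roe to identify the partitioned index of the spinor Dirac operator on $\hat M$ with the Rosenberg index $\alpha^\Lambda(\bar N) = \alpha^\Lambda(N) \in \KTh_{m-2}(\Cstar_\epsilon \Lambda)$. A metric of positive scalar curvature on $M$ would lift to $\hat M$, rendering the Dirac operator uniformly invertible outside a compact set and forcing the partitioned index to vanish, contradicting $\alpha^\Lambda(N) \neq 0$.

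The main obstacle I anticipate is the second step: extracting $\varphi$ cleanly from the assumption $\pi_2(M) = 0$ and ensuring that on the resulting $\Z$-cover $\hat M$ the partition truly picks out a single copy of $\bar N$ with the correct transverse orientation. The subsequent index-theoretic identification is then a (by-now standard) $\Lambda$-equivariant refinement of the partitioned manifold theorem, but one must also check that the partitioned-manifold framework applies in the appropriate non-cocompact, $\Lambda$-equivariant setting on $\hat M$.
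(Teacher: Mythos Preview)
This theorem is not proved in the paper; it is quoted from \cite{HPS14Codimension}. The only information the paper gives about the argument is in \cref{rem:indexNonZero}: the original proof applies coarse index theory to a manifold obtained by \emph{modifying} a certain covering of $M$. That modification is exactly what your outline lacks, and its absence produces a genuine dimensional error.

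In your second paragraph you claim to build $\varphi \colon \bar M \to S^1$ whose regular fiber is $\bar N$. This is impossible: $\bar N$ has codimension two in $\bar M$, while a regular fiber of a map to $S^1$ has codimension one. The angular coordinate on the $D^2$-factor is only defined on $\bar N \times (D^2 \setminus \{0\})$; its fibers there are rays $\bar N \times (0,1)$, not copies of $\bar N$. Whatever extension to $\bar M \setminus \bar N$ the vanishing of $\pi_2$ allows, the resulting map is not defined on $\bar N$, and its fibers are codimension-one submanifolds bounding along $\bar N$, not $\bar N$ itself. Consequently, in your third paragraph the assertion that ``$\bar N$ lifts to a compact, two-sided, codimension-one submanifold'' of the $\Z$-cover $\hat M$ is false: passing to a covering space cannot change codimension.

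What Hanke--Pape--Schick actually do, in outline, is pass to the cover $\bar M$ as you suggest, then \emph{cut out} the tubular neighborhood $\bar N \times D^2$ and glue a half-infinite cylinder $\bar N \times S^1 \times [0,\infty)$ along the boundary $\bar N \times S^1$. In this new non-compact manifold the cross-section $\bar N \times S^1$ is an honest partitioning hypersurface, and the partitioned manifold index theorem (together with a product argument to discard the $S^1$-factor) recovers $\alpha^\Lambda(N)$. The hypotheses $\pi_2(M)=0$ and $j$ injective are used to control the geometry of the glued end so that a positive scalar curvature metric on $M$ yields one of uniformly positive scalar curvature on the modified manifold. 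Because the argument takes place on a manifold that is no longer a cover of $M$, it only rules out positive scalar curvature and does not directly show $\alpha^\Gamma(M)\neq 0$, exactly as \cref{rem:indexNonZero} emphasizes. Your instinct to reduce to a partitioned-manifold statement is sound, but the surgery step is indispensable and cannot be replaced by a covering trick alone.
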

  \begin{rem}\label{rem:indexNonZero}
    The theorem was proved by applying methods from Roe's coarse index theory to a manifold that arises out of a modification of a certain covering of $M$.
    As discussed in~\cite[3]{HPS14Codimension}, this proof only shows that $M$ does not admit positive scalar curvature and does not give $\alpha^\Gamma(M) \neq 0$.
    However, the theorem actually implies that $M$ stably does not admit positive scalar curvature and hence non-vanishing of $\alpha^\Gamma(M)$ would be a consequence of the stable Gromov--Lawson--Rosenberg conjecture (at least if we worked with real $\KTh$-theory throughout).
    It is an open question whether it is possible to prove non-vanishing of $\alpha^\Gamma(M)$ directly under the hypotheses of \cref{thm:HPSCodim2}.
   \end{rem}
  \subsection{Obstructions on fiber bundles and codimension one} Hanke--Pape--Schick state the following application of \cref{thm:HPSCodim2} to fiber bundles:
  \begin{cor}[{\cite[Corollary 4.5]{HPS14Codimension}}]\label{thm:codim2FibreBundle}
    Let $\epsilon \in \{\redd, \max\}$.
    Suppose that $N \hookrightarrow M \twoheadrightarrow \Sigma$ is a fiber bundle, where $\pi_2(N) = 0$ and $\Sigma$ is a closed surface different from $S^2, \R P^2$.
    If $\alpha^\Lambda(N)\neq 0 \in \KTh_{m-2}(\Cstar_\epsilon \Lambda)$, then $M$ does not admit a metric of positive scalar curvature.
  \end{cor}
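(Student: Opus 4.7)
The plan is to deduce this corollary directly from \cref{thm:HPSCodim2} by verifying each of its hypotheses in the fiber bundle setting. Since $\Sigma$ is a surface, the fiber $N$ is a codimension $q = 2$ submanifold of $M$. The normal bundle of $N$ in $M$ is the pullback along $N \to \pt \hookrightarrow \Sigma$ of the tangent space $T_x \Sigma$, hence it is trivial. Together with the assumption that $M$ is spin this forces $w_1(TN) = w_2(TN) = 0$, so $N$ inherits a spin structure and \cref{defi:setup} is satisfied.

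For the homotopy-theoretic assumptions, the key observation is that a closed surface $\Sigma$ different from $S^2$ and $\R P^2$ is aspherical: its universal cover is $\R^2$, so $\pi_k(\Sigma) = 0$ for all $k \geq 2$. Substituting this into the long exact sequence of homotopy groups of the fibration
\begin{equation*}
   \cdots \to \pi_2(N) \to \pi_2(M) \to \pi_2(\Sigma) \to \pi_1(N) \xrightarrow{j} \pi_1(M) \to \pi_1(\Sigma) \to \cdots
\end{equation*}
yields two conclusions simultaneously: the map $j \colon \Lambda \to \Gamma$ is injective (because $\pi_2(\Sigma) = 0$), and $\pi_2(M) = 0$ (because both $\pi_2(N)$ and $\pi_2(\Sigma)$ vanish).

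Having verified that $N \hookrightarrow M$ is a codimension two spin submanifold with trivial normal bundle, that $j$ is injective, and that $\pi_2(M) = 0$, we apply \cref{thm:HPSCodim2} to conclude that non-vanishing of $\alpha^\Lambda(N) \in \KTh_{m-2}(\Cstar_\epsilon \Lambda)$ precludes the existence of a positive scalar curvature metric on $M$.

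There is no real obstacle here — the proof amounts to translating the data of the fiber bundle into the hypotheses of \cref{thm:HPSCodim2}. The only point that requires a moment's thought is the exclusion of $S^2$ and $\R P^2$, which is precisely the condition singling out the aspherical surfaces and is exactly what is needed to force $\pi_2(\Sigma) = 0$.
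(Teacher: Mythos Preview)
Your proof is correct and matches the intended derivation: the paper presents this corollary as Hanke--Pape--Schick's own application of \cref{thm:HPSCodim2}, and your argument verifies exactly the hypotheses of that theorem via the long exact homotopy sequence of the fibration, using that surfaces other than $S^2$ and $\R P^2$ are aspherical. There is nothing to add---this is precisely the route the paper alludes to.
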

  In this special case it turns out that we can settle the question from \cref{rem:indexNonZero} by a more direct argument.
  Indeed, in the following main result of this note, we generalize \cref{thm:codim2FibreBundle} to arbitrary dimensions of the base manifold as well as obtain the stronger conclusion of non-vanishing of $\alpha^\Gamma(M)$:
  \begin{thm}\label{thm:fibreBundleObstruction}
    Let $\epsilon \in \{ \redd, \max \}$.
    Suppose that $N \overset{\iota}{\hookrightarrow} M \overset{\pi}{\twoheadrightarrow} B$ is a fiber bundle, where $B$ is aspherical and $\pi_1(B) = \Gamma / \Lambda$ has finite asymptotic dimension.
    If $\alpha^\Lambda(N) \neq 0 \in \KTh_{m-q}(\Cstar_\epsilon \Lambda)$, then $\alpha^\Gamma(M) \neq 0 \in\KTh_{m}(\Cstar_\epsilon \Gamma)$.
    In particular, $M$ does not admit positive scalar curvature in this case.
  \end{thm}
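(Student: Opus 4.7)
The plan is to construct a transfer-type map
\[
T \colon \KTh_m(\Cstar_\epsilon \Gamma) \longrightarrow \KTh_{m-q}(\Cstar_\epsilon \Lambda)
\]
sending the Rosenberg index $\alpha^\Gamma(M)$ to $\alpha^\Lambda(N)$; once such a $T$ is available, the non-vanishing of $\alpha^\Lambda(N)$ immediately forces $\alpha^\Gamma(M) \neq 0$. The map $T$ will be assembled out of a new variant of the multi-partitioned manifold index theorem, applied to a carefully chosen non-compact cover of $M$.

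The first step is to pass to the pullback cover $\overline{M} := M \times_B \widetilde{B}$, a regular $\Gamma/\Lambda$-cover of $M$. Because $B$ is aspherical, the long exact homotopy sequence of $N \hookrightarrow M \to B$ collapses to a short exact sequence $1 \to \Lambda \to \Gamma \to \Gamma/\Lambda \to 1$, so $\overline{M}$ has fundamental group $\Lambda$ and sits inside a fiber bundle $N \to \overline{M} \to \widetilde{B}$ with contractible base. Because $\pi_1(\overline{M}) = \Lambda$, the spinor Dirac operator on $\overline{M}$ can be twisted with the Mishchenko bundle for $\Lambda$ to produce a $\Cstar_\epsilon \Lambda$-linear elliptic operator whose $\Gamma/\Lambda$-equivariant coarse index lies in the $\KTh$-theory of an equivariant Roe-type algebra of $\overline{M}$ with coefficients in $\Cstar_\epsilon \Lambda$; a standard descent argument identifies this coarse index with the image of $\alpha^\Gamma(M)$ under a canonical map.

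The second step exploits that $\Gamma/\Lambda$ acts properly cocompactly on the contractible manifold $\widetilde{B}$ and has finite asymptotic dimension. Combining these two properties supplies enough coarse-geometric control to dissect $\widetilde{B}$ by a collection of $q = \dim B$ coarsely transversal closed subsets whose common intersection is compact and contains a chosen basepoint $b_0$. Pulling back along $\overline{\pi} \colon \overline{M} \to \widetilde{B}$ produces a multi-partition of $\overline{M}$ whose total intersection is the fiber $\overline{\pi}^{-1}(b_0) \cong N$. Iterating Mayer--Vietoris boundary maps in the coarse $\KTh$-theory with $\Cstar_\epsilon \Lambda$-coefficients then yields the transfer $T$, and the new multi-partitioned index theorem identifies $T$ applied to the coarse index of the twisted Dirac operator on $\overline{M}$ with $\alpha^\Lambda(N) \in \KTh_{m-q}(\Cstar_\epsilon \Lambda)$.

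The main technical obstacle is the proof of this new multi-partitioned theorem itself. The classical partitioned manifold theorem of Roe and its multi-partition iteration by Schick--Zadeh treat the scalar coarse index of a complete Riemannian manifold, whereas here one must work $\Gamma/\Lambda$-equivariantly and with coefficients in $\Cstar_\epsilon \Lambda$. Establishing compatibility of the iterated Mayer--Vietoris boundary with both the residual $\Gamma/\Lambda$-action on $\overline{M}$ and with the Mishchenko-bundle twist, and verifying that under these identifications $T$ produces exactly $\alpha^\Lambda(N)$ from the coarse index of the twisted Dirac operator on $\overline{M}$, will be the bulk of the work; the finite asymptotic dimension hypothesis enters essentially here, securing the coarse decompositions of $\widetilde{B}$ needed to keep the iterated boundary map well-behaved.
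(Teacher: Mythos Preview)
Your overall architecture is the same as the paper's: pass to the $\Lambda$-cover $\overline{M}$, multi-partition it via the base $\widetilde{B}$, and extract $\alpha^\Lambda(N)$ from $\alpha^\Gamma(M)$ through an iterated Mayer--Vietoris/suspension map. But there is a genuine gap in how you obtain the multi-partition, and it is precisely the place where the finite asymptotic dimension hypothesis is actually used.

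You assert that finite asymptotic dimension of $\Gamma/\Lambda$ ``supplies enough coarse-geometric control to dissect $\widetilde{B}$ by a collection of $q = \dim B$ coarsely transversal closed subsets whose common intersection is compact.'' This is not what finite asymptotic dimension gives you. The definition yields covers by $(\asdim +1)$ families of uniformly bounded sets, which has nothing to do with producing $q$ coarse halfspaces meeting in a compact. What is needed is a proper coarse map $\widetilde{B} \to \R^q$ of degree one (equivalently, that $\widetilde{B}$ is \emph{hypereuclidean}), and this is a nontrivial theorem of Dranishnikov: for a closed aspherical $q$-manifold $B$ with $\asdim(\pi_1 B) < \infty$, there is a proper Lipschitz degree-one map $g \colon \widetilde{B} \times \R^k \to \R^{q+k}$ for some $k$. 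The paper invokes exactly this, stabilizes the whole bundle by a $k$-torus if $k>0$, and then composes $g$ with the lifted projection $\overline{\pi}$ to obtain the partitioning map $f = g \circ \overline{\pi} \colon \overline{M} \to \R^q$. The degree-one property is essential: the preimage $N' = f^{-1}(0)$ of a regular value is not literally a single fiber $N$, but degree one guarantees $\alpha^{u|_{N'}}(N') = \alpha^\Lambda(N)$. Your sketch, by contrast, claims the intersection \emph{is} the fiber over $b_0$, which would require $g$ to be a diffeomorphism near $0$---something one has no right to expect. Finally, finite asymptotic dimension is \emph{not} used, as you suggest, to ``keep the iterated boundary map well-behaved''; once you have a continuous coarse map to $\R^q$, the suspension isomorphisms work unconditionally.

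A secondary remark: your packaging via a Mishchenko-twisted $\Gamma/\Lambda$-equivariant Roe algebra on $\overline{M}$ is workable but heavier than necessary. The paper instead works on the universal cover $\widetilde{M}$ and uses the elementary forgetful inclusion $\roeAlg(\widetilde{M})^\Gamma \subseteq \roeAlg(\widetilde{M})^\Lambda$ to pass from $\Gamma$- to $\Lambda$-equivariant indices; the transfer is then just this forgetful map followed by the map to $\Lambda\IndRcptl_\ast(\R^q)$ induced by $\widetilde{u} \times (f \circ p)$ and the $q$-fold suspension $\sigma^q$. No new equivariant-with-coefficients multi-partitioned theorem is required.
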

  In the proof we also employ coarse index theory.
  More specifically, we apply the multi-partitioned manifold index theorem.
  Although variants of it have been established previously by Siegel~\cite{Siegel12Mayer} and Schick--Zadeh~\cite{SZ13Multi}, neither of these references provides the theorem in sufficient generality for our purposes.
  Thus, in~\cref{sec:multipartitioned}, we have included a concise proof of the required result which might be of independent interest (see \cref{thm:multiPartitioned}).

  Unlike the case of \cref{thm:HPSCodim2}, in the proof of \cref{thm:fibreBundleObstruction} we apply the $q$-multi-partitioned manifold index theorem directly to a covering of $M$ (without modifying it further) and thereby obtain the stronger conclusion of $\alpha^\Gamma(M) \neq 0$.
  If $B$ is a surface or, more generally, admits non-positive sectional curvature, then the fact that a suitable covering of $M$ is $q$-partitioned follows from the Cartan--Hadamard theorem applied to $B$.
  To obtain the level of generality as stated, we apply a result of Dranishnikov which says that an aspherical manifolds with a fundamental group of finite asymptotic dimension has a stably hypereuclidean universal covering~\cite{Dranishnikov06hypereuclidean}.
  \begin{rem}\label{rem:HPSmoreGeneral}
  Unlike \cref{thm:codim2FibreBundle}, the condition $\pi_2(N) = 0$ is not required by \cref{thm:fibreBundleObstruction}.
  \fxnote{That was some time ago when I checked it, so maybe convince myself once again...}This, however, is not just a feature of our method: In fact, a careful reading of the proof from \cite{HPS14Codimension} reveals that in \cref{thm:HPSCodim2} the hypothesis of $\pi_2(M) = 0$ can be weakened to surjectivity of the map $\pi_2(N) \to \pi_2(M)$.
  \end{rem}
  Moreover, the idea for \cref{thm:fibreBundleObstruction} works even in full generality in codimension one (without assumptions on higher homotopy groups or on being a fiber bundle):
  \begin{thm}\label{thm:codim1Obstruction}
    Let $\epsilon \in \{ \redd, \max \}$.
    Let $N$ have codimension $q = 1$ and suppose that $j \colon \Lambda \to \Gamma$ is injective.
    If $\alpha^\Lambda(N) \neq 0 \in \KTh_{m-1}(\Cstar_\epsilon \Lambda)$, then $\alpha^\Gamma(M) \neq 0 \in\KTh_{m}(\Cstar_\epsilon \Gamma)$.
    In particular, $M$ does not admit positive scalar curvature in this case.
  \end{thm}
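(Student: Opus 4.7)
The plan is to apply the $q = 1$ case of the multi-partitioned manifold index theorem (\cref{thm:multiPartitioned}) to the universal covering $\widetilde{M} \to M$ together with a single lift of $N$.

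I would first fix a lift $\widetilde{N}_0 \subseteq \widetilde{M}$ of $N$. Since $j \colon \Lambda \to \Gamma$ is injective, the $\Gamma$-stabilizer of $\widetilde{N}_0$ equals $j(\Lambda) \iso \Lambda$, so $\widetilde{N}_0$ is equivariantly identified with the universal cover $\widetilde{N}$ of $N$; in particular, $\Lambda$ acts freely and cocompactly on $\widetilde{N}_0$ with quotient $N$. Moreover, the trivial normal bundle of $N$ in $M$ pulls back to a trivial, $\Lambda$-equivariant normal bundle of $\widetilde{N}_0$ in $\widetilde{M}$.

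The key geometric input is that $\widetilde{N}_0$ separates $\widetilde{M}$ into two open pieces. I would deduce this from a Mayer--Vietoris argument using an open tubular neighborhood $U \iso \widetilde{N}_0 \times (-1, 1)$ of $\widetilde{N}_0$ and its complement $V = \widetilde{M} \setminus \widetilde{N}_0$: the simple connectivity of $\widetilde{M}$ yields $H_1(\widetilde{M}; \Z) = 0$, and the Mayer--Vietoris sequence in low degrees then forces $H_0(V) \iso \Z^2$. Equivalently, a loop in $\widetilde{M}$ meeting $\widetilde{N}_0$ transversally in a single point would contradict $\pi_1(\widetilde{M}) = 1$ by an intersection-number argument.

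With the separation in hand, $\widetilde{N}_0 \subseteq \widetilde{M}$ provides exactly the data needed for the $q = 1$ case of \cref{thm:multiPartitioned} applied to the free cocompact $\Gamma$-manifold $\widetilde{M}$. The theorem then yields a homomorphism $\KTh_m(\Cstar_\epsilon \Gamma) \to \KTh_{m-1}(\Cstar_\epsilon \Lambda)$ carrying $\alpha^\Gamma(M)$ to $\alpha^\Lambda(N)$, so the hypothesis $\alpha^\Lambda(N) \neq 0$ immediately implies $\alpha^\Gamma(M) \neq 0$, and the absence of positive scalar curvature on $M$ follows by the usual Schrödinger--Lichnerowicz vanishing for the Rosenberg index. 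I expect the main obstacle to lie not in these geometric inputs but in having \cref{thm:multiPartitioned} available in sufficient generality to accommodate the subgroup $\Lambda \subseteq \Gamma$, which need not be normal, together with the $\Lambda$-equivariant separating hypersurface $\widetilde{N}_0 \subseteq \widetilde{M}$; providing this is precisely the purpose of \cref{sec:multipartitioned}.
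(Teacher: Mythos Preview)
Your approach is essentially the same as the paper's, but the paper organizes it slightly differently in a way that dissolves the \enquote{obstacle} you anticipate at the end. Rather than working in the universal cover $\tilde M$ with the non-compact $\Lambda$-invariant hypersurface $\tilde N_0$, the paper passes to the intermediate covering $\bar M \to M$ with $\pi_1(\bar M) = \Lambda$. The inclusion $N \hookrightarrow M$ then lifts to an embedding of the \emph{compact} manifold $N$ into $\bar M$, and your separation argument (equivalently, that $N \hookrightarrow \bar M$ is a $\pi_1$-isomorphism with trivial normal bundle) shows $\bar M \setminus N$ has two components, so the signed distance from $N$ gives a continuous coarse map $f \colon \bar M \to \R$. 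Now \cref{thm:multiPartitioned} applies verbatim with $X = \bar M$, the $\Lambda$-covering $p \colon \tilde M \to \bar M$, and the compact partitioning hypersurface $N$; no extra generality is needed.

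The passage from $\Gamma$ to $\Lambda$ is handled not by a more general index theorem but by the forget-equivariance inclusion $\roeAlg(\tilde M)^\Gamma \subseteq \roeAlg(\tilde M)^\Lambda$: one defines
\[
\Psi \colon \KTh_\ast(\Cstar_\epsilon \Gamma) \iso \KTh_\ast(\roeAlg(\tilde M)^\Gamma) \to \KTh_\ast(\roeAlg(\tilde M)^\Lambda) \xrightarrow{\tilde u \times (f\circ p)} \Lambda\IndRcptl_\ast(\R) \xrightarrow{\sigma} \KTh_{\ast-1}(\Cstar_\epsilon \Lambda),
\]
and then \cref{thm:multiPartitioned} gives $\Psi(\alpha^\Gamma(M)) = \alpha^\Lambda(N)$. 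In other words, your worry about accommodating a non-normal subgroup is resolved by first forgetting down to $\Lambda$-equivariance and only then invoking the partitioned index theorem on $\bar M = \Lambda \backslash \tilde M$.
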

    \begin{rem}
    In the proofs of \cref{thm:fibreBundleObstruction,thm:codim1Obstruction}, a homomorphism $\Psi \colon \KTh_\ast(\Cstar_\epsilon \Gamma) \to \KTh_{\ast-q}(\Cstar_\epsilon \Lambda)$ with $\Psi(\alpha^\Gamma(M)) =\alpha^\Lambda(N)$ is constructed which might be of independent interest.
  \end{rem}

  \subsection{Higher \texorpdfstring{$\Ahat$}{A-hat} obstructions via submanifolds}
  In addition to our result on fiber bundles, we have some obstructions via the $\Ahat$-genus of submanifolds of arbitrary codimension under some restriction on the homotopy groups.
  In contrast to before, the proofs of the results below do not employ coarse index theory and essentially only rely on elementary techniques from (co)homology theory.

 First we state a result that applies to intersections of codimension two submanifolds.
  We continue to work in \cref{defi:setup}.
  \begin{thm}\label{thm:codim2IntersectionObstruction}
    Let $N = N_1 \cap \ldots \cap N_k$, where $N_1, \ldots, N_k \subseteq M$ are closed submanifolds that intersect mutually transversely and have trivial normal bundles.
    Suppose that the codimension of $N_i$ is at most two for all $i \in \{1, \ldots, k\}$ and that $\pi_2(N) \to \pi_2(M)$ is surjective.

     If $\Ahat(N) \neq 0$, then $\alpha^\Gamma(M) \neq 0 \in \KTh_\ast(\Cstar_{\max} \Gamma)$.
     In particular, $M$ does not admit a metric of positive scalar curvature in this case.
  \end{thm}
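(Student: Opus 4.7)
The plan is to use iterated Poincaré duality to reduce the question to one about the K-homology fundamental class $u_\ast [M]_{\KTh} \in \KTh_m(B\Gamma)$, and then to promote this to $\KTh_m(\Cstar_{\max}\Gamma)$ via a Mishchenko-style twisting.

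Set $v_i := \mathrm{PD}_M[N_i] \in \homlgy^{q_i}(M;\Z)$, which is well-defined since the trivial normal bundle in particular orients $N_i$. By transversality the normal bundle of $N$ in $M$ is also trivial, so iterated Poincaré duality together with the multiplicativity of the $\Ahat$-class gives
\[
\Ahat(N) \;=\; \langle \Ahat(TM) \cup v_1 \cup \cdots \cup v_k, [M]\rangle.
\]
I then show that each $v_i = u^\ast w_i$ for some $w_i \in \homlgy^{q_i}(B\Gamma;\Z)$, where $u\colon M \to B\Gamma$ classifies the universal cover. For $q_i = 1$ this is tautological. For $q_i = 2$, the complex line bundle $L_i \to M$ with $c_1(L_i) = v_i$ restricts trivially to $N_i$ (its restriction is the complexified normal bundle), and hence to $N \subseteq N_i$. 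Combined with the surjectivity $\pi_2(N) \twoheadrightarrow \pi_2(M)$, this forces $v_i$ to pair trivially with $\pi_2(M) = \pi_2(\widetilde{M})$, so by Hurewicz the restriction $v_i|_{\widetilde{M}}$ vanishes in $\homlgy^2(\widetilde{M};\Z) = \Hom(\pi_2(M),\Z)$. The low-degree terms of the Serre spectral sequence for $\widetilde{M} \to M \to B\Gamma$ now produce the desired lift $w_i$.

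Invoking the K-homological Chern character formula $\chern([M]_{\KTh}) = \Ahat(TM) \cap [M]$ (valid since $M$ is spin) and the projection formula yields
\[
\langle w_1 \cup \cdots \cup w_k, \chern(u_\ast[M]_{\KTh})\rangle \;=\; \Ahat(N) \;\neq\; 0,
\]
so $u_\ast[M]_{\KTh}$ is non-zero in $\KTh_m(B\Gamma) \otimes \Q$. To deduce $\alpha^\Gamma(M) = \mu(u_\ast[M]_{\KTh}) \neq 0 \in \KTh_m(\Cstar_{\max}\Gamma)$, I choose (using the rational Chern character isomorphism applied to a finite subcomplex of $B\Gamma$ through which $u$ factors) a virtual bundle $V \in \KTh^0(B\Gamma)$ with $\chern(V) = N \cdot w_1 \cup \cdots \cup w_k$ for some nonzero integer $N$. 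Applying the trivial-representation trace $\Cstar_{\max}\Gamma \to \C$ to the twisted Rosenberg index $\alpha^\Gamma(M) \cdot V = \mu(u_\ast[M]_{\KTh} \cap V) \in \KTh_m(\Cstar_{\max}\Gamma)$ recovers $N \cdot \Ahat(N) \neq 0$ by the Atiyah--Singer/Mishchenko--Fomenko index formula. Therefore $\alpha^\Gamma(M) \cdot V \neq 0$, and the naturality of the $\KTh^0(B\Gamma)$-module structure on $\KTh_\ast(\Cstar_{\max}\Gamma)$ induced by Mishchenko twisting (which is compatible with assembly) forces $\alpha^\Gamma(M) \neq 0$.

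The main obstacle is this last promotion, which is a priori a strong Novikov-type statement. The crucial point enabling a direct argument here is the concrete origin of the detecting classes $w_i$ as pulled-back (first) Chern classes, allowing one to realize the required K-theoretic twist by an actual bundle on $B\Gamma$; combined with the standard Mishchenko--Fomenko index theorem, this bypasses the need for the strong Novikov conjecture and leaves only the cohomological computations of Steps 1--3 as the essential input.
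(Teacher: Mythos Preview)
Your Steps~1--3 are essentially the paper's argument: rewrite $\Ahat(N)$ as a higher $\Ahat$-genus of $M$ by lifting the Poincar\'e duals $v_i$ through $u^\ast$. Your lifting via the Serre spectral sequence is a valid alternative to the paper's route through the relative cohomology of the pair $(M,N)$; both work.

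The gap is in Step~4. There is no $\KTh^0(\Bfree\Gamma)$-module structure on $\KTh_\ast(\Cstar_{\max}\Gamma)$ compatible with assembly in the way you need. Concretely, from $\mu\bigl(u_\ast[M]_{\KTh}\cap V\bigr)\neq 0$ one cannot formally conclude $\mu\bigl(u_\ast[M]_{\KTh}\bigr)\neq 0$: the twisted Mishchenko--Fomenko index of $\diracOp_M\otimes u^\ast V$ is indeed $\mu\bigl(u_\ast[M]_{\KTh}\cap V\bigr)$, but it is \emph{not} of the form $\alpha^\Gamma(M)\cdot(\text{something})$ for a natural action of $\KTh^0(\Bfree\Gamma)$ on $\KTh_\ast(\Cstar_{\max}\Gamma)$. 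Such an action would require, roughly, a ring map $\KTh^0(\Bfree\Gamma)\to KK(\Cstar_{\max}\Gamma,\Cstar_{\max}\Gamma)$, and for a non-flat bundle $V$ there is no corresponding homomorphism out of $\Cstar_{\max}\Gamma$. (For flat $V$ the trace argument works but then $\chern(V)$ is just the rank, which detects nothing beyond $\Ahat(M)$.)

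The paper closes this gap not by any module-structure argument but by invoking the theorem of Hanke--Schick on the maximal assembly map in low cohomological degree: since the $\xi_i$ lie in degree $\leq 2$, one has $\xi_1\cup\cdots\cup\xi_k\in\Lambda^\ast(\Bfree\Gamma)$, and their result states precisely that a nonzero pairing $\langle z,\chern(u_\ast[\diracOp_M])\rangle$ with $z\in\Lambda^\ast(\Bfree\Gamma)$ forces $\alpha^\Gamma(M)\neq 0$ in $\KTh_\ast(\Cstar_{\max}\Gamma)$. You have correctly isolated the feature that makes this possible (the detecting classes are generated by degree~$\leq 2$ classes, hence realizable via line bundles), but the actual mechanism in Hanke--Schick is not a module structure: it passes through central extensions of $\Gamma$ determined by the degree-two classes and the associated twisted group $\Cstar$-algebras, producing genuine homomorphisms out of $\Cstar_{\max}\Gamma$ that detect the twisted indices. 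That argument is not the one you sketched, and it cannot be replaced by the formal manipulation in your final paragraph.
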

  In particular, specializing to a single codimension two submanifold, this settles the question of \cref{rem:indexNonZero} in the case of $\Ahat(N) \neq 0$ (which implies $\alpha^\Lambda(N) \neq 0$).

  The proof of this theorem (see \cref{sec:proofOfCohomological}) proceeds as follows: First we show that the surjectivity of $\pi_2(N) \to \pi_2(M)$ allows to rewrite $\Ahat(N)$ as a higher $\Ahat$-genus of $M$.
   Afterwards we appeal to a result of Hanke--Schick~\cite{HS08Low} about the maximal strong Novikov conjecture in low cohomological degrees and conclude that $\alpha^\Gamma(M) \neq 0 \in \KTh_\ast(\Cstar_{\max} \Gamma)$.
  If we allow higher codimensions for the submanifolds $N_i$, our method still works but we are no longer in a position to apply~\cite{HS08Low} and hence need to assume the strong Novikov conjecture:

  \begin{thm}\label{thm:generalIntersectionObstruction}
   Let $\epsilon \in \{ \redd, \max \}$.
   Let $N = N_1 \cap \ldots \cap N_k$, where $N_1, \ldots, N_k \subseteq M$ are closed submanifolds that intersect mutually transversely and have trivial normal bundles.
   Let $d$ be the maximum of the codimensions of $N_i$ over all $i \in \{1, \ldots, k\}$ and suppose that $\pi_j(M) = 0$ for $2 \leq j \leq d$.

   If $\Ahat(N) \neq 0$ and $\Gamma$ satisfies the (maximal, if $\epsilon = \max$) strong Novikov conjecture, then $\alpha^\Gamma(M) \neq 0 \in \KTh_\ast(\Cstar_\epsilon \Gamma)$.
  \end{thm}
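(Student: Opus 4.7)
The plan is to mirror the strategy of \cref{thm:codim2IntersectionObstruction}, replacing the appeal to the Hanke--Schick low-degree result by the full strong Novikov conjecture. The argument splits into two stages. First, we rewrite $\Ahat(N)$ as a higher $\Ahat$-genus of $M$ whose cohomological ingredients are pulled back from $\Bfree\Gamma$. Second, we deduce non-vanishing of $\alpha^\Gamma(M)$ via a Chern character pairing combined with the assumed strong Novikov conjecture.

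For the first stage, transversality together with the triviality of each normal bundle implies that the Poincar\'e dual of $N \subseteq M$ is the cup product of the individual Poincar\'e duals:
\begin{equation*}
[N]^\ast = [N_1]^\ast \cup \ldots \cup [N_k]^\ast \in \homlgy^{q_1+\ldots+q_k}(M;\Q),
\end{equation*}
where $q_i$ denotes the codimension of $N_i$. Moreover, since $TM|_N$ is stably isomorphic to $TN$, we have $\Ahat(TM)|_N = \Ahat(TN)$, which yields
\begin{equation*}
\Ahat(N) = \bigl\langle \Ahat(TM) \cup [N_1]^\ast \cup \ldots \cup [N_k]^\ast,\, [M] \bigr\rangle.
\end{equation*}
The hypothesis $\pi_j(M) = 0$ for $2 \leq j \leq d$ implies that the universal covering of $M$ is $d$-connected, hence the classifying map $u \colon M \to \Bfree \Gamma$ is a $(d+1)$-equivalence. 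In particular, $u^\ast \colon \homlgy^j(\Bfree \Gamma; \Q) \to \homlgy^j(M; \Q)$ is surjective for every $j \leq d$. Since each $q_i \leq d$, every $[N_i]^\ast$ admits a rational lift $v_i \in \homlgy^{q_i}(\Bfree \Gamma; \Q)$. Setting $v \coloneqq v_1 \cup \ldots \cup v_k$, we obtain
\begin{equation*}
\Ahat(N) = \bigl\langle \Ahat(TM) \cup u^\ast v,\, [M] \bigr\rangle,
\end{equation*}
a higher $\Ahat$-genus of $M$ associated with the class $v \in \homlgy^\ast(\Bfree \Gamma; \Q)$.

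For the second stage, let $[M]_{\KTh} \in \KTh_m(M)$ denote the $\KTh$-homological fundamental class of the spin manifold $M$. The identity $\chern([M]_{\KTh}) = \Ahat(TM) \cap [M]$ (in rational homology), combined with naturality of the cap product, shows that the higher $\Ahat$-genus above agrees, up to a non-zero constant, with the pairing $\langle v, \chern(u_\ast [M]_{\KTh}) \rangle$ in rational (co)homology of $\Bfree \Gamma$. Consequently, $\Ahat(N) \neq 0$ forces $u_\ast [M]_{\KTh} \neq 0$ in $\KTh_m(\Bfree \Gamma) \tens \Q$, using rational injectivity of the Chern character. Finally, under the (maximal, if $\epsilon = \max$) strong Novikov conjecture, the assembly map $\mu \tens \Q$ is injective, so $\alpha^\Gamma(M) = \mu(u_\ast [M]_{\KTh})$ is non-zero in $\KTh_\ast(\Cstar_\epsilon \Gamma)$.

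The main obstacle is essentially bookkeeping: one needs to invoke carefully the Chern character identification between higher $\Ahat$-genera and pairings against $u_\ast [M]_{\KTh}$ (in the spirit of Rosenberg's classical treatment of the Novikov conjecture for spin manifolds), and verify that the Poincar\'e-dual lifts $v_i$ can be chosen simultaneously within the connectivity range afforded by the hypothesis on the higher homotopy groups. Once these points are in place, the proof is a direct extension of the argument for \cref{thm:codim2IntersectionObstruction} with the low-degree input of Hanke--Schick replaced by the full strong Novikov conjecture.
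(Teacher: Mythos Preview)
Your proof is correct and follows the same strategy as the paper: exploit the $(d{+}1)$-connectedness of $u \colon M \to \Bfree\Gamma$ to lift each Poincar\'e dual $[N_i]^\ast$ to $\homlgy^\ast(\Bfree\Gamma;\Q)$, rewrite $\Ahat(N)$ as a higher $\Ahat$-genus of $M$, and conclude via the Chern character and the strong Novikov conjecture. The only cosmetic difference is that the paper routes the lifting through the relative groups $\homlgy^\ast(M,N;\Q)$ and $\homlgy^\ast(v;\Q)$ (so as to handle \cref{thm:codim2IntersectionObstruction,thm:generalIntersectionObstruction} by a single proposition under a common homological hypothesis), whereas you lift directly in absolute cohomology, which is adequate here since $u^\ast$ is already an isomorphism in the relevant degrees.
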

  Note that the conditions on the homotopy groups are also slightly different than in \cref{thm:codim2IntersectionObstruction}.
  In fact, in \cref{thm:cohomologicalObstruction}, we prove our results under a more general homological condition which includes the restrictions on the homotopy groups from \cref{thm:codim2IntersectionObstruction,thm:generalIntersectionObstruction} as a special case (see \cref{thm:translatingHomotopyConditions}).

If we restrict \cref{thm:generalIntersectionObstruction} to a single submanifold, we obtain:
  \begin{cor}\label{thm:cohomologicalHigherCodimSpecial}
   Let $\epsilon \in \{ \redd, \max \}$.
    Suppose that $N$ has codimension $q$ and $\pi_j(M) = 0$ for $2 \leq j \leq q$.
    If $\Ahat(N) \neq 0$ and $\Gamma$ satisfies the (maximal, if $\epsilon = \max$) strong Novikov conjecture, then $\alpha^\Gamma(M) \neq 0 \in \KTh_\ast(\Cstar_\epsilon \Gamma)$.
  \end{cor}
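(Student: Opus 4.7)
The plan is to derive \cref{thm:cohomologicalHigherCodimSpecial} immediately from \cref{thm:generalIntersectionObstruction} by specializing to $k=1$ and $N_1=N$. The mutual transversality condition becomes vacuous, the triviality of the normal bundle is built into \cref{defi:setup}, the maximum codimension $d$ equals $q$, and the homotopy hypothesis $\pi_j(M)=0$ for $2\leq j\leq q$ matches word for word. Once \cref{thm:generalIntersectionObstruction} is established, the corollary follows simply by reading off the statement.

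It is nevertheless worth outlining how the underlying mechanism specializes in this case, since the single-submanifold setting is considerably more transparent. Under the assumption $\pi_j(M)=0$ for $2\leq j\leq q$, the classifying map $u\colon M\to \Bfree\Gamma$ is a $(q+1)$-equivalence, so obstruction theory yields that $u^*\colon \homlgy^q(\Bfree\Gamma;\Z)\to\homlgy^q(M;\Z)$ is an isomorphism. Since $N\hookrightarrow M$ has trivial normal bundle of codimension $q$, its Poincaré dual in $M$ lifts to a class $c\in\homlgy^q(\Bfree\Gamma;\Z)$. Combined with $\iota^*\Ahat(M)=\Ahat(N)$ (again using the triviality of the normal bundle) and naturality of the cap product, this rewrites
\[
  \Ahat(N) \;=\; \bigl\langle \Ahat(M)\cup u^*c,\; [M] \bigr\rangle,
\]
exhibiting $\Ahat(N)$ as one of the higher $\Ahat$-genera of $M$ in the sense of Novikov. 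The (maximal, if $\epsilon=\max$) strong Novikov conjecture for $\Gamma$ then promotes the non-vanishing of this higher $\Ahat$-genus to $\alpha^\Gamma(M)\neq 0 \in \KTh_m(\Cstar_\epsilon\Gamma)$ via the standard Chern character argument applied to the image of $u_\ast[M]_{\KTh}$.

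The step I expect to be the main obstacle is the obstruction-theoretic production of the cohomology lift $c$: in the more general intersection setup of \cref{thm:generalIntersectionObstruction} this is precisely where the homological reformulation promised as \cref{thm:cohomologicalObstruction} becomes necessary, since for multiple submanifolds one must track the lifts of several Poincaré duals and their cup products simultaneously. For the single submanifold treated here, however, the hypothesis $\pi_j(M)=0$ for $2\leq j\leq q$ is exactly strong enough for the ordinary obstruction theory argument to go through, so no further machinery is required beyond invoking \cref{thm:generalIntersectionObstruction}.
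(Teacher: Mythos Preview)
Your proposal is correct and matches the paper's approach: the corollary is obtained precisely by specializing \cref{thm:generalIntersectionObstruction} to the single submanifold $N_1=N$, so that $d=q$ and the homotopy hypothesis lines up verbatim. Your supplementary outline in the second paragraph is a slightly more direct version of the same mechanism the paper develops in \cref{thm:cohomologicalObstruction} and \cref{thm:translatingHomotopyConditions}(b)---lifting the Poincar\'e dual of $N$ through $u^*$ via the $(q+1)$-connectivity of $u$, rewriting $\Ahat(N)$ as a higher $\Ahat$-genus of $M$, and then invoking the strong Novikov conjecture---so no substantive difference.
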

  In the special case that $\Gamma$ is virtually nilpotent (which implies the strong Novikov conjecture), the consequence of \cref{thm:cohomologicalHigherCodimSpecial} that $M$ cannot admit positive scalar curvature was proved by Engel using a different method~\cite{Engel15Rough}.

  Moreover, under the assumptions of \cref{thm:cohomologicalHigherCodimSpecial}, even higher $\Ahat$-genera of $N$ are obstructions to psc on $M$.
  This was also discovered by Engel using yet a different method, see \cite{Engel16Localizing}.

  \section{The multi-partitioned manifold index theorem}\label{sec:multipartitioned}
  \subsection{Coarse index theory}
   Here we briefly review the relevant aspects of coarse index theory, see~\cite{Roe96Index,HR00Analytic}.
   Let $\epsilon \in \{ \redd, \max \}$ be fixed in this section.
   Let $X$ be a proper metric space endowed with an isometric, free and proper action of a discrete group $\Gamma$.
   We denote the \emph{$\Gamma$-equivariant Roe algebra} of $X$ by $\roeAlg(X)^\Gamma$.
   It is defined to be the (spacial if $\epsilon = \redd$ or maximal if $\epsilon = \max$) completion of the $\ast$-algebra of all $\Gamma$-equivariant locally compact operators of finite propagation defined over a fixed suitable Hilbert space representation of $\contZ(X)$.
   Recall the \emph{index map} (or \emph{assembly map}) from locally finite $\KTh$-homology of the quotient $\Gamma \backslash X$ to the $\KTh$-theory of equivariant Roe algebra:
   \begin{equation}
   \Ind^\Gamma \colon \Klf_\ast(\Gamma \backslash X) \to \KTh_\ast(\roeAlg(X)^\Gamma).\label{eq:indexMap}
   \end{equation}
   For an explicit definition of the assembly in the non-equivariant case (also pertaining to $\epsilon = \max$), see for instance~\cite[Subsection~4.6]{GWY08Novikov}.
   A straightforward generalization of the same formulas to the equivariant case then yields the equivariant assembly map $\KTh^{\mathrm{lf},\Gamma}_\ast(X) \to \KTh_\ast(\roeAlg(X)^\Gamma)$.
   To obtain the map as displayed in \labelcref{eq:indexMap}, we precompose with the induction isomorphism $\Klf_\ast(\Gamma \backslash X) \iso \KTh^{\mathrm{lf},\Gamma}_\ast(X)$ in analytic K-homology as it is exhibited via the Paschke duality picture in~\cite[Lemma~12.5.4]{HR00Analytic}, \cite[Theorem~4.3.25]{Siegel12Homological}.

   If $X$ is a complete spin $m$-manifold, we may apply the index map to the class $[\diracOp_{\Gamma \backslash X}] \in \Klf_m(\Gamma \backslash X)$ of the spinor Dirac operator on $\Gamma\backslash X$.
   We will use the notation $\Ind^\Gamma(\diracOp_{X}) := \Ind^\Gamma([\diracOp_{\Gamma \backslash X}])$.
   If $X = \tilde{M}$ is the universal covering of a closed spin manifold $M$ and $\Gamma = \pi_1(M)$, then there is a canonical isomorphism $\KTh_\ast(\roeAlg(\tilde{M})^\Gamma) \iso \KTh_\ast(\Cstar_\epsilon \Gamma)$ and $\Ind^\Gamma(\diracOp_{\tilde{M}})$ recovers the Rosenberg index $\alpha^\Gamma(M)$.\fxnote{Perhaps add some references for all of this.}

   In the following we introduce some notation which will feature in our formulation of the multi-partitioned manifold index theorem.
  Let $\Gamma$ be a countable discrete group and fix a model for the classifying space $\Bfree \Gamma$ as a locally finite simplicial complex.
  As usual, we denote its universal covering by $\Efree \Gamma$.
    \begin{defi}
    Let $Y$ be a proper metric space and define
    \begin{gather*}
      \Gamma \Klf_i(Y) := \colim_{Z}\KTh_i^\lf(Z),\\
      \Gamma \IndRcptl_i(Y) := \colim_{Z}\KTh_i(\roeAlg(\tilde{Z})^\Gamma),
    \end{gather*}
    where the colimits range over \emph{admissible subsets} $Z \subseteq \Bfree \Gamma \times Y$ and $Z$ is called admissible if it is closed and $\pr_2|_Z \colon Z \to Y$ is proper.
    Moreover, $\tilde{Z}$ denotes the lift of $Z$ to $\Efree \Gamma \times Y$.
  \end{defi}
  Roughly speaking, $\Gamma \Klf_i(Y)$ behaves like locally finite $\KTh$-homology in $Y$ and like ordinary $\KTh$-homology in the \enquote{$\Bfree \Gamma$-slot}.

  Recall that a map $f \colon (Y, d) \to (Y^\prime, d^\prime)$ between metric spaces is called \emph{coarse} if $f^{-1}(B^\prime)$ is bounded for each bounded set $B^\prime \subseteq Y^\prime$ and there exists a function $\rho \colon \Rgeq \to \Rgeq$ such that $d^\prime(f(x), f(y)) \leq \rho(d(x,y))$ for all $x,y \in Y$.
  Since the $\KTh$-theory of the (equivariant) Roe algebra is functorial with respect to (equivariant) coarse maps~\cite[Definition~6.3.15]{HR00Analytic}, the group $\Gamma \IndRcptl_i(Y)$ is functorial in $Y$ with respect to coarse maps.

  The index map~\labelcref{eq:indexMap} induces an index map in the limit $\Ind^\Gamma \colon \Gamma \Klf_\KPh(Y) \to \Gamma \IndRcptl_\KPh(Y)$ which is natural in $Y$ with respect to continuous coarse maps.

    \begin{ex}
    Taking $Y = \pt$ to be a point, $\Gamma \KTh_\KPh(\pt) = \KTh_\ast(\Bfree \Gamma)$ as defined via the $\KTh$-theory spectrum and $\Gamma \IndRcptl_\KPh(\pt) \iso \KTh_\KPh(\Cstar_\epsilon \Gamma)$.
    Moreover, the index map $\Ind^\Gamma \colon \Gamma \Klf_\KPh(\pt) \to \Gamma \IndRcptl_\KPh(\pt)$ recovers the assembly map $\mu \colon \KTh_\ast(\Bfree \Gamma) \to \KTh_\ast(\Cstar_\epsilon \Gamma)$ featuring in the strong Novikov conjecture.
  \end{ex}

  The external product in $\KTh$-homology also induces an external product,
  \begin{equation*}
    \Gamma \Klf_{n}(X) \tens \KTh_d^\lf(Y) \overset{\times}{\to} \Gamma \Klf_{n+d}(X \times Y).
  \end{equation*}

  \begin{prop}[Suspension isomorphism]\label{thm:suspensionIso}
    Let $Y$ be a proper metric space.
    There are isomorphisms $s$ and $\sigma$ which make the following diagram commutative,
    \begin{equation*}
      \begin{tikzcd}
        \Gamma \Klf_{\ast + 1}(Y \times \R) \dar{\iso}[swap]{s} \rar{\Ind^\Gamma} & \Gamma \IndRcptl_{\ast +1}(Y \times \R) \dar{\sigma}[swap]{\iso}  \\
        \Gamma \Klf_{\ast }(Y ) \rar{\Ind^\Gamma} & \Gamma \IndRcptl_{\ast}(Y)
      \end{tikzcd}
    \end{equation*}
    such that $s(x \times [\diracOp_{\R}]) = x$ for all $x \in \Gamma \Klf_{\ast }(Y )$.
  \end{prop}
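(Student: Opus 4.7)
The plan is to realize both $s$ and $\sigma$ as connecting homomorphisms in Mayer--Vietoris long exact sequences associated with the decomposition $Y \times \R = (Y \times \Rgeq) \cup (Y \times \Rleq)$, whose intersection is canonically $Y \times \{0\} \iso Y$. Commutativity of the square will then follow from naturality of the index map with respect to the inclusions that appear in the Mayer--Vietoris construction.

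The crucial input is the vanishing
\[
  \Gamma \Klf_\ast(Y \times \Rgeq) = 0 = \Gamma \IndRcptl_\ast(Y \times \Rgeq),
\]
and symmetrically for $\Rleq$. Given an admissible $Z \subseteq \Bfree \Gamma \times Y \times \Rgeq$, I would absorb it into the admissible subset $Z_\infty := \bigcup_{n \geq 0} T_n(Z)$, where $T_n$ denotes translation by $n$ along the $\R$-coordinate. The shift $T_1 \colon Z_\infty \to Z_\infty$ is a proper continuous injection onto a proper subset, which provides an Eilenberg swindle forcing $\KTh^\lf_\ast(Z_\infty) = 0$; passing to the colimit over admissible $Z$ yields the vanishing of $\Gamma \Klf$ on $Y \times \Rgeq$. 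The same shift acts on the equivariant Roe algebra $\roeAlg(\widetilde{Z_\infty})^\Gamma$ (where $\widetilde{Z_\infty}$ is the lift to $\Efree\Gamma \times Y \times \Rgeq$) and implements a standard coarse Eilenberg swindle, annihilating its K-theory and giving the analogous vanishing on the Roe side.

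With these vanishings in hand, I would assemble Mayer--Vietoris sequences on both rows of the diagram. On the K-homology side this comes from the usual $\KTh^\lf$ Mayer--Vietoris applied to each admissible $Z \subseteq \Bfree \Gamma \times Y \times \R$, passed to the colimit after a cofinality step showing that admissible subsets respecting the half-space decomposition are cofinal (this reduces to intersecting an arbitrary admissible subset with the two closed half-spaces, each intersection being admissible in the obvious sense). On the Roe algebra side, the Mayer--Vietoris sequence comes from the short exact sequence of ideals associated with the closed subsets $Y \times \Rgeq$ and $Y \times \Rleq$ in $Y \times \R$. Since $\Ind^\Gamma$ is natural for the inclusions of pieces, it commutes with the two connecting homomorphisms, yielding the commutative square and identifying $s$ and $\sigma$ as isomorphisms. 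The normalization $s(x \times [\diracOp_\R]) = x$ is then checked by reducing to $Y = \pt$, where the Mayer--Vietoris boundary $\KTh^\lf_1(\R) \to \KTh_0(\pt)$ sends $[\diracOp_\R]$ to $1$ by the standard computation; the general case follows from multiplicativity of the Mayer--Vietoris connecting map with respect to external products and naturality in $Y$.

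The hard part, I expect, will be the cofinality and functoriality bookkeeping required to make Mayer--Vietoris rigorous at the level of the colimits defining $\Gamma \Klf$ and $\Gamma \IndRcptl$, and to verify that the two connecting homomorphisms are intertwined by the index map on the nose. Once this foundational setup is in place, both the Eilenberg swindle argument and the boundary-map calculation of $s([\diracOp_\R])$ are routine.
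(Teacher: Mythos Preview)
Your proposal is correct and follows essentially the same route as the paper: both construct $s$ and $\sigma$ as Mayer--Vietoris boundary maps for the half-space decomposition $Y \times \R = (Y \times \Rgeq) \cup (Y \times \Rleq)$, establish the isomorphism via an Eilenberg swindle/flasqueness argument on the half-spaces, handle the colimit by a cofinality argument over admissible $Z$, and deduce the product formula from the standard fact $\bdMV([\diracOp_\R]) = 1$. The only point the paper stresses that you leave implicit is the \emph{coarsely excisive} hypothesis needed on the cover of each admissible $Z$ for the coarse Mayer--Vietoris sequence to exist on the Roe side; this is exactly part of the ``cofinality and functoriality bookkeeping'' you flag as the hard part.
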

  \begin{proof}
    To construct $s$ and $\sigma$ we use the Mayer--Vietoris boundary maps associated to the cover $Y \times \R = Y \times \Rgeq \cup Y \times \Rleq$ for $\KTh$-homology and for the $\KTh$-theory of the Roe algebra, respectively.\fxnote{Add reference for coarse Mayer--Vietoris}
    Indeed, take an admissible subset $Z \subseteq \Bfree \Gamma \times Y \times \R$ such that the cover $Z = Z \cap (\Bfree \Gamma \times Y \times \Rgeq) \cup Z \cap (\Bfree \Gamma \times Y \times \Rleq)$ is coarsely excisive so that we have a Mayer--Vietoris sequence both in $\KTh$-homology and for the $\KTh$-theory of the Roe algebra, see for example~\cite{HRY93coarse}.
    Let
    \begin{gather*}
      s_Z \colon \Klf_{\ast +1}(Z) \overset{\bdMV}\to \Klf_{\ast}(Z \cap (\Bfree \Gamma \times Y \times \{0\})) \to \Gamma \Klf_\ast(Y), \\
      \sigma_Z \colon \KTh_{\ast +1}(\roeAlg(\widetilde{Z})^\Gamma) \overset{\bdMV}\to \KTh_{\ast}(\roeAlg(\widetilde{Z} \cap (\Efree \Gamma \times Y \times \{0\}))^\Gamma) \to \Gamma \IndRcptl_\ast(Y).
    \end{gather*}
    The family of those admissible subset where the cover $Z = Z \cap (\Bfree \Gamma \times Y \times \Rgeq) \cup Z \cap (\Bfree \Gamma \times Y \times \Rleq)$ is coarsely excisive is cofinal in the directed set of all admissible subsets, hence the maps $s_Z$ and $\sigma_Z$ induce the required maps $s$ and $\sigma$ in the limit.
    Moreover, one can verify that the family of admissible $Z$ where $s_Z$ and $\sigma_Z$ are both defined and an isomorphism is also cofinal in the family of all admissible sets.
    The isomorphism statement relies on showing that we have a cofinal collection of admissible $Z$ such that $Z \cap (\Bfree \Gamma \times Y \times \Rgeq)$ and $Z \cap (\Bfree \Gamma \times Y \times \Rleq)$ is \emph{flasque}.
    A more detailed version of this argument can be found in~\cite[Proposition~4.2.3]{Zeidler16PhD}.

    Thus $s$ and $\sigma$ are isomorphisms.
    Finally, the claim $s(x \times [\diracOp_{\R}]) = x$ for all $x \in \Gamma \Klf_{\ast }(Y )$ is a standard fact in $\KTh$-homology which follows from $\bdMV([\diracOp_\R]) = 1$ for the Mayer--Vietoris boundary map associated to $\R = \Rgeq \cup \Rleq$.
  \end{proof}
  \begin{cor}\label{thm:locality}
    For every $\varepsilon > 0$, we have
    \begin{equation*}
      \Gamma \Klf_\ast(\R^q) \iso \colim_{K \subset \Bfree \Gamma} \Klf_\ast(K \times \R^q) \overset{\iota^!}{\iso} \colim_{K \subset \Bfree \Gamma} \Klf_\ast(K \times B_\varepsilon(0)),
    \end{equation*}
     where the colimit ranges over compact subsets $K \subseteq \Bfree \Gamma$ and the second isomorphism is induced by the inclusion of the open ball $\iota \colon B_\varepsilon(0) \hookrightarrow \R^q$.
  \end{cor}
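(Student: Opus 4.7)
The corollary splits into two independent isomorphism claims, and I would address them separately.

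For the first isomorphism, I would iterate the suspension isomorphism from \cref{thm:suspensionIso} $q$ times in order to reduce to the case $Y = \pt$. Applying the proposition inductively to the factorizations $\R^q = \R^{q-1} \times \R$, $\R^{q-1} = \R^{q-2} \times \R$, and so on, yields an isomorphism
\[
s^q \colon \Gamma \Klf_\ast(\R^q) \xrightarrow{\iso} \Gamma \Klf_{\ast-q}(\pt) \iso \KTh_{\ast-q}(\Bfree \Gamma),
\]
where the last identification combines the Example immediately following \cref{thm:suspensionIso} with the fact that for a locally finite simplicial complex $\Bfree \Gamma$ one has $\KTh_{\ast-q}(\Bfree \Gamma) \iso \colim_K \KTh_{\ast-q}(K)$ over finite (equivalently compact) subcomplexes $K$. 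On the slice side, for each compact $K$ the Mayer--Vietoris construction used in the proof of \cref{thm:suspensionIso}, restricted to the admissible subset $K \times \R^q \subseteq \Bfree \Gamma \times \R^q$, produces a classical iterated suspension isomorphism $\Klf_\ast(K \times \R^q) \iso \KTh_{\ast-q}(K)$. Naturality of the Mayer--Vietoris boundary map $\bdMV$ under closed embeddings of admissible subsets then makes these slice isomorphisms compatible both with the inclusions $K \subseteq K^\prime$ and with the global suspension $s^q$, so passing to $\colim_K$ identifies the two sides of the first isomorphism.

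For the second isomorphism, I would use the open-closed long exact sequence in $\Klf$. Radial coordinates identify the closed complement $F := \R^q \setminus B_\varepsilon(0)$ with $S^{q-1} \times [0, \infty)$. Since the half-line $[0, \infty)$ is $\Klf$-trivial (its one-point compactification is contractible, so reduced $\KTh$-homology vanishes), the one-point compactification of $(K \times S^{q-1}) \times [0, \infty)$ is a cone on the compact space $K \times S^{q-1}$ and hence contractible. This gives $\Klf_\ast(K \times F) = 0$ for every compact $K \subseteq \Bfree \Gamma$. The six-term exact sequence associated to the decomposition of $K \times \R^q$ into the open subset $K \times B_\varepsilon(0)$ and the closed complement $K \times F$ therefore forces the open-restriction map $\iota^! \colon \Klf_\ast(K \times \R^q) \to \Klf_\ast(K \times B_\varepsilon(0))$ to be an isomorphism, which is preserved under $\colim_K$.

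The main obstacle is the compatibility check in the first isomorphism: one must verify that the global suspension $s^q$, defined via Mayer--Vietoris on arbitrary admissible subsets of $\Bfree \Gamma \times \R^q$, restricts compatibly to the classical slice suspension on each $K \times \R^q$. This is essentially a naturality statement for Mayer--Vietoris along closed embeddings of admissible subsets and, given the construction in the proof of \cref{thm:suspensionIso}, reduces to a routine diagram chase.
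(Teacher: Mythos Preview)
Your proposal is correct and follows essentially the same route as the paper. For the first isomorphism the paper does exactly what you outline: it observes that compact $K \subseteq \Bfree\Gamma$ give admissible subsets $K \times \R^q$, produces the canonical map $J$ into $\Gamma\Klf_\ast(\R^q)$, and shows it is an isomorphism by comparing both sides to $\KTh_{\ast-q}(\Bfree\Gamma)$ via the iterated suspension $s^q$ and its slice analogue $t^q$ satisfying $t^q = s^q \circ J$; your compatibility check is precisely the equality $t^q = s^q \circ J$. For the second isomorphism the paper argues slightly more directly---it observes that the collapse map $(K \times \R^q)^+ \to (K \times B_\varepsilon(0))^+$ is a homotopy equivalence---whereas you feed the same contractibility of the cone $(K \times F)^+$ into the open--closed long exact sequence; the two arguments are equivalent packagings of the same fact.
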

  \begin{proof}
    Since for a compact subset $K \subseteq \Bfree \Gamma$ the set $K \times \R^q$ is admissible, we obtain a canonical map $J \colon \colim_{K \subset \Bfree \Gamma} \Klf_\ast(K \times \R^q) \to \Gamma \Klf_\ast(\R^q)$.
    The $q$-fold iteration of the suspension isomorphism from \cref{thm:suspensionIso} yields an isomorphism $s^q \colon \Gamma \Klf_\ast(\R^q) \iso \KTh_{\ast -q}(\Bfree \Gamma)$.
    An analogous argument as in the proof of \cref{thm:suspensionIso} produces an isomorphism $t^q \colon \colim_{K \subset \Bfree \Gamma}(K \times \R^q) \iso \KTh_{\ast -q}(\Bfree \Gamma)$ such that $t^q = s^q \circ J$.
    In particular, this shows that $J$  must be an isomorphism.

    For each $K \subseteq \Bfree \Gamma$, the restriction $\iota^{!} \colon \Klf_\ast(K \times \R^q) \to \Klf_\ast(K \times B_\varepsilon(0))$ is induced by the map on $K \times \R^q$ that is the identity on $K \times B_\varepsilon(0)$ and takes $K \times (\R^q \setminus B_\varepsilon(0))$ to infinity in the one-point compactification of $K \times B_\varepsilon(0)$.
    Since this map induces a homotopy equivalence between the one-point compactifications, this implies that $\iota^{!} \colon \Klf_\ast(K \times \R^q) \to \Klf_\ast(K \times B_\varepsilon(0))$ is an isomorphism.
  \end{proof}
    \Cref{thm:locality} implies that classes in $\Gamma \Klf_\ast(\R^q)$ (and thus their images in $\Gamma \IndRcptl_\ast(\R^q)$) depend only on the restrictions to arbitrarily small open subsets.
    A very similar localization property was exhibited by Schick--Zadeh~\cite{SZ13Multi} and is at the heart of their approach to the multi-partitioned manifold index theorem.
    Analogously, our approach to the theorem in the next subsection crucially relies on the localization property from \cref{thm:locality}.
    \subsection{Multi-partitioned manifolds}\fxnote{Maybe add some more explanation on how our approach relates to Siegel and Schick--Zadeh}
    Let $f \colon X \to Y$ be a proper map, $u \colon X \to \Bfree \Gamma$ classifying a covering $p \colon \tilde{X} \to X$.
  Then the map $u \times f \colon X \to \Bfree \Gamma \times Y$ induces a map $(u \times f)_\KPh \colon \KTh_\KPh^\lf(X) \to \Gamma \Klf_\ast(Y)$.
  If $f$ is also coarse, then the $\Gamma$-equivariant map $\tilde{u} \times (f \circ p) \colon \tilde{X} \to \Efree \Gamma \times Y$ induces a map $(\tilde{u} \times (f \circ p))_\KPh \colon \KTh_\KPh(\roeAlg(\tilde{X})^\Gamma) \to \Gamma \IndRcptl_\KPh(Y)$.    \begin{defi}
  A complete Riemannian manifold $X$ is called $q$-multi-partitioned by a closed submanifold $M \subseteq X$ via a continuous coarse map $f \colon X \to \R^q$ if $f$ is smooth near $f^{-1}(0)$ such that $0 \in \R^q$ is a regular value with $f^{-1}(0) = M$.
  \end{defi}
    \begin{defi}\label{defi:partitionedMfdIndex}
    Let $X$ be a complete spin $m$-manifold that is $q$-multi-partitioned by $M \subseteq X$ via $f \colon X \to \R^q$.
    Fix a $\Gamma$-covering $p \colon \tilde{X} \to X$ which is classified by a map $u \colon X \to \Bfree \Gamma$.
    Consider the lifted map $\tilde{u} \colon \tilde{X} \to \Efree \Gamma$.
    Then we define the \emph{higher partitioned manifold index} of $X$ to be
    \begin{equation*}
       \alpha_\PM^{f,u}(X) := (\tilde{u} \times (f\circ p))_\KPh(\Ind^\Gamma(\diracOp_{\tilde{X}})) \in \Gamma \IndRcptl_m(\R^q).
    \end{equation*}
  \end{defi}
         Furthermore, if $M$ is a closed spin manifold and $v \colon M \to \Bfree \Gamma$ a continuous map, then we set $\alpha^v(M) := \mu(v_\ast[\diracOp_M]) \in \KTh_\ast(\Cstar_\epsilon \Gamma)$, where $\mu \colon \KTh_\ast(\Bfree \Gamma) \to \KTh_\ast(\Cstar \Gamma)$ is the assembly map.
       If $v$ classifies the universal covering of $M$ this yields the Rosenberg index $\alpha^\Gamma(M)$.
  \begin{thm}[Multi-partitioned manifold index theorem]\label{thm:multiPartitioned}
   In the setup of \cref{defi:partitionedMfdIndex} we have
    \begin{equation*}
      \sigma^q \left( \alpha_\PM^{f,u}(X) \right) = \alpha^{u|_M}(M) \in \KTh_{m-q}(\Cstar_\epsilon \Gamma),
    \end{equation*}
    where $\sigma^q \colon \Gamma \IndRcptl_\ast(\R^q) \to \KTh_{\ast - q}(\Cstar_\epsilon \Gamma)$ is the $q$-fold iteration of the suspension isomorphism from \cref{thm:suspensionIso}.
  \end{thm}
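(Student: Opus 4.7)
The plan is to reduce the theorem, via naturality of the suspension isomorphism (\cref{thm:suspensionIso}) and the localization principle (\cref{thm:locality}), to a purely $\KTh$-homological product formula for the spinor Dirac operator near $M$. First, by naturality of the index map along the continuous coarse pushforward induced by $u \times f \colon X \to \Bfree \Gamma \times \R^q$, we have
\begin{equation*}
\alpha_\PM^{f,u}(X) = \Ind^\Gamma\bigl((u \times f)_\ast [\diracOp_X]\bigr) \in \Gamma \IndRcptl_m(\R^q).
\end{equation*}
Iterating the commutative square of \cref{thm:suspensionIso} with $Y = \R^{q-1}, \R^{q-2}, \ldots, \pt$ yields $\sigma^q \circ \Ind^\Gamma_{\R^q} = \mu \circ s^q$, where $\mu \colon \KTh_\ast(\Bfree \Gamma) \to \KTh_\ast(\Cstar_\epsilon \Gamma)$ is the usual assembly map. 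Consequently the theorem reduces to the $\KTh$-homological identity
\begin{equation*}
s^q\bigl((u \times f)_\ast [\diracOp_X]\bigr) = (u|_M)_\ast [\diracOp_M] \in \KTh_{m-q}(\Bfree \Gamma).
\end{equation*}

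Next, I would exploit \cref{thm:locality}: the left-hand side above depends only on the restriction of $(u \times f)_\ast [\diracOp_X]$ to a small neighborhood of $0 \in \R^q$ and hence, by properness of $f$, only on the behavior of $[\diracOp_X]$ near $M = f^{-1}(0)$. Since $0$ is a regular value of the submersion $f$, the implicit function theorem, together with the canonical trivialization $df|_{\nu_M} \colon \nu_M \xrightarrow{\cong} M \times \R^q$ of the normal bundle, produces a spin-compatible diffeomorphism $\phi \colon U \xrightarrow{\cong} M \times B_\varepsilon(0)$ of a tubular neighborhood of $M$ onto a small product neighborhood with $f|_U = \pr_2 \circ \phi$.

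Under this identification, the standard product formula for the spinor Dirac operator yields $[\diracOp_X]|_U = [\diracOp_M] \times [\diracOp_{\R^q}]$ in $\KTh$-homology, where $[\diracOp_{\R^q}] = [\diracOp_\R]^{\times q}$ is the iterated external product. Hence, after invoking \cref{thm:locality} once more to re-extend the picture from $B_\varepsilon(0)$ to $\R^q$, we obtain
\begin{equation*}
(u \times f)_\ast [\diracOp_X] = (u|_M)_\ast [\diracOp_M] \times [\diracOp_\R]^{\times q} \in \Gamma \Klf_m(\R^q),
\end{equation*}
and applying the relation $s(x \times [\diracOp_\R]) = x$ from \cref{thm:suspensionIso} a total of $q$ times eliminates the factors $[\diracOp_\R]$ one by one, delivering $(u|_M)_\ast [\diracOp_M]$ as required.

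The principal technical obstacle lies in the bookkeeping of the colimit systems defining $\Gamma \Klf_\ast$ and $\Gamma \IndRcptl_\ast$: one needs to verify that all pushforwards factor compatibly through cofinal systems of admissible subsets, that $\Ind^\Gamma$ is natural with respect to these pushforwards as claimed, and that the product formula $[\diracOp_X]|_U = [\diracOp_M] \times [\diracOp_{\R^q}]$ carries through the identification of $U$ with $M \times B_\varepsilon(0)$ in a manner compatible with the trivialization of the normal bundle and with the localization isomorphism of \cref{thm:locality}. These verifications are essentially routine once the correct cofinal families have been identified, but they constitute the bulk of the actual work.
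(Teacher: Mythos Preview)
Your proposal is correct and follows essentially the same route as the paper: reduce via the commutativity of the suspension/index square in \cref{thm:suspensionIso} to the $\KTh$-homological identity $s^q((u\times f)_\ast[\diracOp_X]) = (u|_M)_\ast[\diracOp_M]$, then use the localization of \cref{thm:locality} to pass to a tubular neighborhood where the product formula $[\diracOp_X]|_U = [\diracOp_M]\times[\diracOp_{\R^q}]$ applies, and finish with the iterated relation $s(x\times[\diracOp_\R])=x$. The paper organizes this slightly differently---first treating the global product case $X=M\times\R^q$ and then reducing the general case to it via a single commutative diagram encoding the localization---but the content is the same.
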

  \begin{proof}
    We have $\sigma^q ( \alpha_\PM^{f,u}(X) ) = \Ind^\Gamma(s^q(u \times f)_\KPh([\diracOp_{X}]))$ by \cref{thm:suspensionIso}.
    We first deal with the product situation $X = M \times \R^q$ and $u = v \circ \pr_1$.
    In this special case we have $[\diracOp_{X}] =  [\diracOp_M] \times [\diracOp_{\R^q}]$ and the statement follows from an iterated application of the product formula from \cref{thm:suspensionIso}:
    \begin{equation*}
     \sigma^q \left( \alpha_\PM^{f,u}(X) \right) =  \Ind^\Gamma \left( s^q \left( v_\KPh([\diracOp_M]) \times [\diracOp_{\R^q}] \right) \right) = \Ind^\Gamma(v_\KPh[\diracOp_M]) = \alpha^{v}(M).
    \end{equation*}

    In the general case we may assume without loss of generality that there exists $\varepsilon > 0$ such that $f^{-1}(B_\varepsilon(0)) \iso M \times B_\varepsilon(0)$ isometrically.
    Furthermore, we consider the following commutative diagram where we set $v := u|_{M}$ and make extensive use of \cref{thm:suspensionIso,thm:locality}.
    \begin{equation*}
    \begin{tikzcd}
      \KTh^\lf_\KPh(X) \rar{(u \times f)_\KPh} \dar[swap]{\iota^!} & \Gamma \Klf_\KPh(\R^q) \dar{\iota^!}[swap]{\iso} \ar[bend left]{ddr}{s^q}[swap]{\iso} \\
       \KTh^\lf_\KPh(f^{-1}(B_\varepsilon(0))) \rar{(u \times f)_\KPh} & \colim_{K \subset \Bfree \Gamma}\KTh_{n}^\lf(K \times B_\varepsilon(0) ) \\
      \KTh^\lf_\KPh(M \times B_\varepsilon(0)) \uar[swap]{\iso} \ar{ur}[swap]{(v \times \id)_\KPh} & & \KTh_{\KPh-q}(\Bfree \Gamma)\\
      \KTh^\lf_\KPh(M \times \R^q) \uar{\iota^!}[swap]{\iso} \rar{(v \times \id)_\KPh} & \Gamma \Klf_\KPh(\R^q). \ar{uu}{\iso}[swap]{\iota^!} \ar{ur}{\iso}[swap]{s^q}
    \end{tikzcd}
    \end{equation*}
    Since $f^{-1}(B_\varepsilon(0)) \iso M \times B_\varepsilon(0)$, the class $[\diracOp_{X}] \in \Klf_m(X)$ goes to $[\diracOp_M] \times [\diracOp_{\R^q}] \in \Klf_m(M \times \R^q)$ following the left vertical maps in the diagram from top to bottom.
    Thus the diagram implies $(u \times f)_\KPh([\diracOp_{X}]) =v_\KPh([\diracOp_M]) \times [\diracOp_{\R^q}] \in \Gamma \Klf_m(\R^q)$.
    This reduces the general case to the product situation which has already been established.
  \end{proof}
  \begin{cor}
    If $\alpha^{u|_M}(M) \neq 0$ in the setup of \cref{defi:partitionedMfdIndex}, then $\Ind^\Gamma(\diracOp_{\tilde{X}}) \neq 0$.
    In this case the Riemannian metric on $X$ does not have uniform positive scalar curvature.
  \end{cor}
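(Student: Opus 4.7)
The plan is to read off both conclusions directly from the multi-partitioned manifold index theorem together with the standard Lichnerowicz-type vanishing in coarse index theory.

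For the first assertion I would argue as follows. By \cref{thm:multiPartitioned} we have the identity
\begin{equation*}
  \sigma^q\bigl(\alpha_\PM^{f,u}(X)\bigr) = \alpha^{u|_M}(M) \in \KTh_{m-q}(\Cstar_\epsilon \Gamma).
\end{equation*}
The map $\sigma^q$ is the $q$-fold iteration of the suspension isomorphism from \cref{thm:suspensionIso} and is therefore an isomorphism. Hence the hypothesis $\alpha^{u|_M}(M) \neq 0$ forces $\alpha_\PM^{f,u}(X) \neq 0$ in $\Gamma\IndRcptl_m(\R^q)$. Unpacking the definition, $\alpha_\PM^{f,u}(X) = (\tilde u \times (f\circ p))_\KPh(\Ind^\Gamma(\diracOp_{\tilde X}))$ is the image of the coarse index $\Ind^\Gamma(\diracOp_{\tilde X}) \in \KTh_m(\roeAlg(\tilde X)^\Gamma)$ under a functorial map, so non-vanishing of the former implies non-vanishing of the latter.

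For the second assertion I would invoke the coarse index theoretic Lichnerowicz vanishing principle: if $X$ carries a metric of uniform positive scalar curvature, then the lifted metric on $\tilde X$ does as well (since $p$ is a Riemannian covering), and consequently the lifted spinor Dirac operator $\diracOp_{\tilde X}$ is invertible at zero by the Lichnerowicz--Schrödinger formula $\diracOp_{\tilde X}^2 = \nabla^\ast \nabla + \scal/4$. Invertibility of the Dirac operator implies that its coarse index in $\KTh_m(\roeAlg(\tilde X)^\Gamma)$ vanishes, which is contradicted by the first part of the corollary. Thus $X$ cannot have uniform positive scalar curvature.

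There is no real obstacle here: the work has all been done in \cref{thm:multiPartitioned} and in the general theory, and the corollary is really a packaging of these two inputs. The only small subtlety is the appeal to the standard fact that uniform positive scalar curvature yields invertibility of $\diracOp_{\tilde X}$ and hence triviality of its coarse index; I would simply cite \cite{Roe96Index} or \cite{HR00Analytic} for this.
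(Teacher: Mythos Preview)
Your argument is correct and is exactly the intended one; the paper states this corollary without proof because it is immediate from \cref{thm:multiPartitioned}, the definition of $\alpha_\PM^{f,u}(X)$, and the standard Lichnerowicz vanishing for the coarse index, precisely as you have written.
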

  \subsection{Fiber bundles and codimension one}
  We are now almost ready to prove \cref{thm:fibreBundleObstruction,thm:codim1Obstruction}.
  Before doing that, we state the result of \citeauthor{Dranishnikov06hypereuclidean} which is needed for \cref{thm:fibreBundleObstruction}.
  \begin{thm}[{\cite[Theorem~3.5]{Dranishnikov06hypereuclidean}}]\label{thm:DraHypereuclidean}
    Let $\tilde{B}$ be the universal covering of a closed aspherical $q$-manifold $B$ with $\asdim(\pi_1(B)) < \infty$.
    Then there exists $k \in \N$ and a proper Lipschitz map $g \colon \tilde{B} \times \R^k \to \R^{q+k}$ of degree $1$.
  \end{thm}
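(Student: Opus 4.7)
The plan is to produce the required degree-$1$ map $g \colon \tilde{B} \times \R^k \to \R^{q+k}$ by assembling an anti-Čech approximation of $\tilde{B}$ and stabilising to reach the correct target dimension. Since $B$ is a closed aspherical $q$-manifold, $\tilde{B}$ is a contractible Riemannian manifold on which $\Gamma := \pi_1(B)$ acts properly and cocompactly, and the Milnor-Švarc lemma yields $\asdim(\tilde{B}) = \asdim(\Gamma) =: n < \infty$.

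The first step is to construct, for each scale $R > 0$, a uniformly bounded open cover $\mathcal{U}_R$ of $\tilde{B}$ with Lebesgue number at least $R$ and multiplicity at most $n+1$; this is exactly the content of $\asdim(\Gamma) \leq n$. The associated partition-of-unity map $\phi_R \colon \tilde{B} \to |N(\mathcal{U}_R)|$ into the geometric realisation of the nerve is proper and, after rescaling the target metric, Lipschitz. Because $|N(\mathcal{U}_R)|$ has dimension at most $n$, it admits a piecewise-linear embedding into $\R^{2n+1}$, and composition yields a proper Lipschitz map $\psi_R \colon \tilde{B} \to \R^{2n+1}$. Taking $k := \max(0, 2n+1-q)$ and stabilising both source and target by trivial Euclidean factors produces a candidate proper Lipschitz map $g \colon \tilde{B} \times \R^k \to \R^{q+k}$.

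The main obstacle, and the heart of the argument, is arranging that $g$ has degree exactly $1$ rather than some other integer (or zero). This is where contractibility of $\tilde{B}$ enters decisively: the nerves $|N(\mathcal{U}_R)|$ can be chosen so that $\phi_R$ is a homotopy equivalence in the appropriate range via the standard nerve lemma, so the composition behaves well on the fundamental class $[\tilde{B}] \in \homlgy_q^\lf(\tilde{B})$. Concretely, I would compare $g$ near infinity with a trivialised local chart around a base point $x_0 \in \tilde{B}$ (which contributes $\pm 1$ to the degree) and show that the contribution of the remainder is zero in $\homlgy_{q+k}^\lf(\R^{q+k}) \cong \Z$, since $\tilde{B} \setminus \{x_0\}$ maps through an acyclic complex of the wrong dimension.

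The hard part, and where a careful implementation along Dranishnikov's lines is needed, is assembling the above across a telescope of scales $R \to \infty$ so that the final $g$ is genuinely proper on the whole of $\tilde{B} \times \R^k$ and retains the Lipschitz bound. This requires an inductive construction in which the PL embeddings of successive nerves are chosen coherently and the final normalisation of the degree is realised by a proper homotopy within the space of proper Lipschitz maps, without disturbing the metric control inherited from the anti-Čech system.
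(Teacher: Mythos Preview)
The paper does not prove this theorem at all: it is quoted verbatim as \cite[Theorem~3.5]{Dranishnikov06hypereuclidean} and used as a black box in the proof of \cref{thm:fibreBundleObstruction}. There is therefore nothing in the paper to compare your attempt against.

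As for the sketch itself, the ingredients you list (anti-\v{C}ech covers witnessing finite asymptotic dimension, nerve maps, PL embedding into Euclidean space, stabilisation) are indeed the raw materials Dranishnikov uses, but your degree argument does not work as written. Saying that ``$\tilde{B} \setminus \{x_0\}$ maps through an acyclic complex of the wrong dimension'' is not justified: the nerves $|N(\mathcal{U}_R)|$ are not acyclic, and a single nerve map $\psi_R$ built from one scale $R$ has no reason to have nonzero degree---indeed a generic proper Lipschitz map from a contractible $q$-manifold to $\R^q$ has degree $0$. The nontrivial content of Dranishnikov's theorem is precisely that the fundamental class survives, and this requires more than localising near a point; it uses the interplay between the manifold structure (Poincar\'e duality on $\tilde{B}$) and the coarse information coming from the whole anti-\v{C}ech system, not just one level. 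Your final paragraph acknowledges that the telescope construction is ``the hard part'', but that is exactly where the degree-$1$ conclusion is actually produced, so the proposal as it stands is an outline of the setup rather than of the proof.
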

  \begin{proof}[Proof of \cref{thm:fibreBundleObstruction}]
    By \cref{thm:DraHypereuclidean}, we may assume that there exists a proper Lipschitz map $g \colon \tilde{B} \to \R^q$ of degree $1$ (if necessary, replace the entire bundle by its product with the $k$-torus $S^1 \times \cdots \times S^1$).
    Since Lipschitz functions can be approximated by smooth Lipschitz functions (see for example~\cite{GW79Approximation}), we may suppose without loss of generality that $g$ is smooth.
    In addition, we may assume that $0 \in \R^q$ is a regular value by Sard's theorem.
    Now consider the covering $\bar{M} \twoheadrightarrow M$ with $\pi_1(\bar{M}) = \Lambda = \pi_1(N)$.
    The bundle projection $\pi \colon M \to B$ lifts to a $\Gamma/\Lambda$-equivariant smooth map $\bar{\pi} \colon \bar{M} \to \tilde{B}$.
    Let $N^\prime := (g \circ \bar{\pi})^{-1}(0)$.
    Then $\bar{M}$ is $q$-multi-partitioned by $N^\prime$ via $f := g \circ \bar{\pi}$.
    Let $u \colon \bar{M} \to \Bfree \Lambda$ be the map that classifies the $\Lambda$-covering $p \colon \tilde{M} \to \bar{M}$, where $\tilde{M}$ is the universal covering of $M$.
    Since $g$ has degree $1$ and each fiber of $\bar{\pi}$ is a copy of $N$ inside $\bar{M}$ over each of which $p$ restricts to the universal covering, we have that $\alpha^{u|_{N^\prime}}(N^\prime) = \alpha^\Lambda(N) \in \KTh_{m-q}(\Cstar \Lambda)$.
    Now consider the homomorphism $\Psi \colon \KTh_\ast(\Cstar_\epsilon \Gamma) \to \KTh_{\ast-q}(\Cstar_\epsilon \Lambda)$ given by the following composition
    \begin{equation*}
      \Psi \colon \KTh_\ast(\Cstar_\epsilon \Gamma) \iso \KTh_\ast(\roeAlg(\tilde{M})^\Gamma) \to \KTh_\ast(\roeAlg(\tilde{M})^\Lambda) \overset{\tilde{u} \times (f \circ p)}{\to} \Lambda \IndRcptl_\ast(\R^q) \overset{\sigma^q}{\to} \KTh_{\ast-q}(\Cstar_\epsilon \Lambda),
    \end{equation*}
    where the second map is induced by the inclusion $\roeAlg(\tilde{M})^\Gamma \subseteq \roeAlg(\tilde{M})^\Lambda$ that just forgets part of the equivariance.
  We have
  \begin{equation*}
    \Psi(\alpha^\Gamma(M)) = \sigma^q \left(\alpha_\PM^{f, u}(\bar{M})  \right)= \alpha^{u|_{N^\prime}}(N^\prime) = \alpha^\Lambda(N),
  \end{equation*}
  where the first equality is by definition of $\alpha_\PM^{f, u}(\bar{M})$ and the second equality is due to \cref{thm:multiPartitioned} applied to $f = g \circ \bar{\pi} \colon \bar{M} \to \R^q$ and $u \colon \bar{M} \to \Bfree \Lambda$.
  Since $\Psi$ is a homomorphism this concludes the proof.
  \end{proof}
  \begin{proof}[Proof of \cref{thm:codim1Obstruction}]
    The following is very similar to the previous proof:
    We again consider the covering $\bar{M} \to M$ such that $\pi_1 \bar{M} = \Lambda$.
   With the right choice of basepoints it is possible to lift the inclusion $N \hookrightarrow M$ to an embedding $N \hookrightarrow \bar{M}$.
    Since $N \hookrightarrow \bar{M}$ has codimension one with trivial normal bundle and is an isomorphism on $\pi_1$, it follows that $\bar{M} \setminus N$ has precisely two connected components.
      Hence $\bar{M}$ is partitioned (or $1$-multi-partitioned in our terminology above) by $N$ via a map $f \colon \bar{M} \to \R$ which is essentially the distance function from $N$.
      Let $\tilde{M}$ be the universal covering of $M$ and $u \colon \bar{M} \to \Bfree \Lambda$ the map that classifies the $\Lambda$-covering $p \colon \tilde{M} \to \bar{M}$.
      Again we obtain a map
       \begin{equation*}
      \Psi \colon \KTh_\ast(\Cstar_\epsilon \Gamma) \iso \KTh_\ast(\roeAlg(\tilde{M})^\Gamma) \to \KTh_\ast(\roeAlg(\tilde{M})^\Lambda) \overset{\tilde{u} \times (f \circ p)}{\to} \Lambda \IndRcptl_\ast(\R) \overset{\sigma}{\to} \KTh_{\ast-1}(\Cstar_\epsilon \Lambda)
    \end{equation*}
    such that $\Psi(\alpha^\Gamma(M)) = \alpha^\Lambda(N)$.
  \end{proof}

  \section{Higher \texorpdfstring{$\Ahat$}{A-hat} obstructions via submanifolds}\label{sec:proofOfCohomological}
    \begin{setup} \label{defi:multipleSetup}
    In addition to \cref{defi:setup}, let $N = N_1 \cap \ldots \cap N_k$, where $N_1, \ldots, N_k \subseteq M$ are closed submanifolds with trivial normal bundle that intersect mutually transversely\footnote{To be precise, this means that the inclusion $N_1 \times \cdots \times N_k \hookrightarrow M^k$ is transverse to the diagonal embedding $\triangle \colon M \hookrightarrow M^k$ in the usual sense.}.
    Let $d$ be the maximum of the codimensions of the submanifolds $N_i$ for $i \in \{1, \ldots, k\}$.
    Denote by $u \colon M \to \Bfree \Gamma$ a classifying map of the universal covering and let $v := u \circ \iota  \colon N \to \Bfree \Gamma$.
    Moreover, let $w \colon N \to \Bfree \Lambda$ a classifying map of the universal covering of $N$.
  \end{setup}
  We follow the notation of \cite{HS08Low} and let $\Lambda^*(\Bfree \Gamma)$ denote the subring of $\homlgy^\ast(\Bfree \Gamma; \Q)$ generated by cohomology classes of degree at most two.
  \begin{prop}\label{thm:cohomologicalObstruction}
    Let $\epsilon \in \{ \redd, \max \}$.
    In \cref{defi:multipleSetup} suppose that the induced map in relative homology
    \begin{equation}
      (u, \id_N)_\ast \colon \homlgy_k(M, N) \to \homlgy_k(v) \quad \text{is injective for $2 \leq k \leq d$}.\label{eq:weakHomologicalCond}
    \end{equation}
    Assume furthermore that one of the following conditions holds:
    \begin{enumerate}[(a)]
          \item We have $\epsilon = \max$, $d \leq 2$ and there exists $x \in \Lambda^\ast(\Bfree \Gamma)$ such that the higher $\Ahat$-genus $\left< \Ahat(\tangentBdl N) \cup v^\ast(x), [N] \right>$ does not vanish.\label{item:cohomologicalCodim2}
      \item The group $\Gamma$ satifies the \parensup{maximal, if $\epsilon = \max$} strong Novikov conjecture and there exists $x \in \homlgy^\ast(\Bfree \Gamma; \Q)$ such that the higher $\Ahat$-genus $\left< \Ahat(\tangentBdl N) \cup v^\ast(x), [N] \right>$ does not vanish.\label{item:cohomologicalhigherCodim}
    \end{enumerate}
    Then $\alpha^\Gamma(M) \in \KTh_\ast(\Cstar_\epsilon \Gamma)$ does not vanish.
    In particular, $M$ does not admit a metric of positive scalar curvature.
  \end{prop}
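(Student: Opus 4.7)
The plan is to rewrite the hypothesized higher $\Ahat$-genus of $N$ as a higher $\Ahat$-genus of $M$ itself, and then appeal either to the low-degree strong Novikov theorem of Hanke--Schick in case (a), or to the strong Novikov conjecture directly in case (b), to deduce $\alpha^\Gamma(M) \neq 0$.

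First, triviality of the normal bundle of each $N_i$ together with transversality implies that $N \subseteq M$ has trivial normal bundle, so $\iota^\ast \Ahat(\tangentBdl M) = \Ahat(\tangentBdl N)$. Poincaré duality then gives
\begin{equation*}
  \left< \Ahat(\tangentBdl N) \cup v^\ast(x), [N] \right> = \left< \Ahat(\tangentBdl M) \cup u^\ast(x) \cup \mathrm{PD}_M(N_1) \cup \cdots \cup \mathrm{PD}_M(N_k), [M] \right>,
\end{equation*}
using mutual transversality to split $\mathrm{PD}_M(N)$ as a cup product. The crucial step is to lift each Poincaré dual $\mathrm{PD}_M(N_i) \in \homlgy^{q_i}(M;\Q)$ to a class $y_i \in \homlgy^{q_i}(\Bfree \Gamma;\Q)$ with $u^\ast(y_i) = \mathrm{PD}_M(N_i)$. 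The restriction $\iota^\ast \mathrm{PD}_M(N_i) \in \homlgy^{q_i}(N;\Q)$ vanishes because it factors through restriction to $N_i$, where it computes the Euler class of the trivial normal bundle. Hence $\mathrm{PD}_M(N_i)$ lifts to $\tilde{\tau}_i \in \homlgy^{q_i}(M,N;\Q)$. For $q_i = 1$, the map $u^\ast \colon \homlgy^1(\Bfree \Gamma;\Q) \to \homlgy^1(M;\Q)$ is already an isomorphism, so no additional hypothesis is needed. For $2 \leq q_i \leq d$, the hypothesis \eqref{eq:weakHomologicalCond} dualizes via the universal coefficient theorem over $\Q$ to surjectivity of $(u,\id_N)^\ast \colon \homlgy^{q_i}(v;\Q) \to \homlgy^{q_i}(M,N;\Q)$, providing a further lift $\beta_i \in \homlgy^{q_i}(v;\Q)$. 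Pushing $\beta_i$ into $\homlgy^{q_i}(\Bfree \Gamma;\Q)$ via the mapping cone long exact sequence of $v$, and chasing the naturality ladder between the long exact sequences of the pairs $(M,N)$ and $v$, produces the desired $y_i$.

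Setting $y := y_1 \cup \cdots \cup y_k \cup x \in \homlgy^\ast(\Bfree \Gamma;\Q)$, the higher $\Ahat$-genus on $N$ is identified with the higher $\Ahat$-genus $\left< \Ahat(\tangentBdl M) \cup u^\ast(y), [M] \right>$ of $M$, which is thus nonzero. In case (a) each $y_i$ has degree at most $2$ and $x \in \Lambda^\ast(\Bfree \Gamma)$, so $y \in \Lambda^\ast(\Bfree \Gamma)$, and the Hanke--Schick theorem on the maximal strong Novikov conjecture in low cohomological degrees yields $\alpha^\Gamma(M) \neq 0 \in \KTh_\ast(\Cstar_{\max} \Gamma)$. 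In case (b) the (maximal) strong Novikov conjecture for $\Gamma$ converts the non-vanishing of this higher $\Ahat$-genus directly into $\alpha^\Gamma(M) \neq 0$. The main technical obstacle is the middle step: one must check that the lift coming from surjectivity of $(u,\id_N)^\ast$ on $\homlgy^{q_i}$, when combined with the mapping cone sequence, actually delivers a class on $\Bfree \Gamma$ whose pullback equals $\mathrm{PD}_M(N_i)$ on the nose, rather than merely modulo the ambiguity from $\homlgy^{q_i-1}(N)$.
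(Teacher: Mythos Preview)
Your argument is correct and matches the paper's proof essentially step for step: lift each Poincaré dual $\eta_i$ to $\homlgy^{q_i}(M,N;\Q)$ using triviality of the normal bundle, then to $\homlgy^{q_i}(v;\Q)$ via the hypothesis (dualized over $\Q$), then push into $\homlgy^{q_i}(\Bfree\Gamma;\Q)$, and finally rewrite the higher $\Ahat$-genus of $N$ as one of $M$ before invoking Hanke--Schick or the strong Novikov conjecture. The only cosmetic difference is in degree one, where the paper observes that $(u,\id_N)_\ast \colon \homlgy_1(M,N)\to\homlgy_1(v)$ is an isomorphism (since $u$ is $2$-connected), whereas you use directly that $u^\ast \colon \homlgy^1(\Bfree\Gamma;\Q)\to\homlgy^1(M;\Q)$ is an isomorphism.

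Your flagged ``technical obstacle'' is not actually an obstacle: the naturality ladder you invoke contains the commuting square
\[
\begin{tikzcd}
\homlgy^{q_i}(v;\Q) \rar \dar[swap]{(u,\id_N)^\ast} & \homlgy^{q_i}(\Bfree\Gamma;\Q) \dar{u^\ast} \\
\homlgy^{q_i}(M,N;\Q) \rar & \homlgy^{q_i}(M;\Q),
\end{tikzcd}
\]
so if $\beta_i \mapsto \tilde\tau_i$ under the left vertical map and $\tilde\tau_i \mapsto \eta_i$ under the bottom map, then $u^\ast y_i = \eta_i$ on the nose. The paper uses exactly this without further comment.
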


   \begin{proof}
      Let $\eta_{i} \in \homlgy^{\ast}(M; \Q)$ denote the Poincaré dual of $N_i \subseteq M$.
      Since $N_i$ has trivial normal bundle the restriction of $\eta_i$ to $N_i$ vanishes.
      In particular, $\iota^\ast \eta_i = 0 \in \homlgy^\ast(N; \Q)$, so there exists $\tilde{\eta}_i \in \homlgy^\ast(M, N; \Q)$ that restricts to $\eta_i \in \homlgy^\ast(M; \Q)$.
            By the upper bound on the codimensions, the degree of $\eta_i$ is at most $d$ for each $i \in \{1, \ldots, k\}$.
            Note that $u \colon M \to \Bfree \Gamma$ is $2$-connected and thus $(u, \id_N)_\ast \colon \homlgy_1(M, N) \to \homlgy_1(v)$ is an isomorphism by the Hurewicz theorem and the long exact sequence associated to the triple $N \hookrightarrow M \overset{u}{\to} \Bfree \Gamma$.
      Together with \labelcref{eq:weakHomologicalCond} this implies that there exists $\tilde{\xi}_i \in \homlgy^\ast(v; \Q)$ such that $(u, \id_N)^\ast \tilde{\xi}_i = \tilde{\eta}_i$ for all $i \in \{1, \ldots, k\}$.
      Restricting these to $\Bfree \Gamma$, we get $\xi_i \in \homlgy^\ast(\Bfree \Gamma; \Q)$ such that $u^\ast \xi_i = \eta_i$.
      We have that $\eta = \eta_1 \cup \ldots \cup \eta_k = u^\ast(\xi)$ is the Poincaré dual of $N = N_1 \cap \ldots \cap N_k$, where $\xi := \xi_1 \cup \ldots \cup \xi_k$.
      For each $x \in \homlgy^\ast(\Bfree \Gamma; \Q)$, we then compute
      \begin{align*}
       \left< \Ahat(\tangentBdl N) \cup v^\ast(x), [N] \right> &= \left< \Ahat(\tangentBdl N) \cup \Ahat(\nu(N \hookrightarrow M)) \cup v^\ast(x), [N] \right> \\
      &= \left< \iota^* \Ahat(\tangentBdl M) \cup  v^\ast(x), [N] \right> \\
      &= \left< \Ahat(\tangentBdl M) \cup u^\ast(x) \cup \eta, [M] \right> \\
      &= \left< \Ahat(\tangentBdl M) \cup u^\ast(x \cup \xi), [M] \right> \\
       &=  \left< u^*(x \cup \xi), \chern([\diracOp_M]) \right>,
      \end{align*}
      where triviality of the normal bundle $\nu(N \hookrightarrow M)$ is used in the first equality.
      In other words, the particular higher $\Ahat$-genus of $N$ we started with can be rewritten as a higher $\Ahat$-genus of $M$.

      In case \labelcref{item:cohomologicalCodim2}, this implies that $\left< z, \chern(u_\ast [\diracOp_M]) \right> \neq 0$, where $z := x \cup \xi \in \Lambda^\ast(\Bfree \Gamma)$.
      Hence by~\cite[Theorem~1.2]{HS08Low}, this shows that $\alpha^\Gamma(M) = \mu(u_\ast([\diracOp_M])) \neq 0 \in \KTh_\ast(\Cstar_{\max} \Gamma)$.
      In case \labelcref{item:cohomologicalhigherCodim}, the computation simply shows that $0 \neq u_\ast([\diracOp_M]) \in \KTh_\KPh(\Bfree \Gamma) \tens \Q$.
      Hence by the postulated rational injectivity of the (maximal, if $\epsilon = \max$) assembly map, the higher index does not vanish.
  \end{proof}
  It remains to put forward some further (sufficient) conditions for the homological condition \labelcref{eq:weakHomologicalCond}.
  For instance, we find it conceptually appealing to consider the square
  \begin{equation*}
       \begin{tikzcd}
        N \rar{\iota} \dar{w} & M  \dar{u} \\
        \Bfree \Lambda \rar{j} & \Bfree \Gamma
      \end{tikzcd},
   \end{equation*}
   and ask the induced map in relative homology $\homlgy_\ast(M, N) \to \homlgy_\ast(\Bfree \Gamma, \Bfree \Lambda)$ to be an equivalence up to a certain degree.
   Indeed, as it turns out in the lemma below, this is an easy sufficient condition for \labelcref{eq:weakHomologicalCond}.
   Moreover, $\homlgy_\ast(M, N) \to \homlgy_\ast(\Bfree \Gamma, \Bfree \Lambda)$ being an isomorphism up to degree two and surjective in degree three is equivalent to surjectivity of $\pi_2(N) \to \pi_2(M)$.
   The latter is precisely the condition that we have already encountered in \cref{rem:HPSmoreGeneral}.

  \begin{lem}\label{thm:translatingHomotopyConditions}
    Suppose that in \cref{defi:multipleSetup} one of the following conditions holds.
    \begin{itemize}
      \item [(a)] \label{item:pi2surj}The map $\pi_2(N) \to \pi_2(M)$ is surjective and $d = 2$.
      \item [(a\textprime)]\label{item:homlgyEqu}The map $\homlgy_k(M, N) \to \homlgy_k(\Bfree \Gamma, \Bfree \Lambda)$ is an isomorphism for $2 \leq k \leq d$ and surjective for $k = d+1$.
      \item [(b)]The homotopy groups $\pi_k(M)$ vanish for $2 \leq k \leq d$.
    \end{itemize}
    Then the condition \labelcref{eq:weakHomologicalCond} from the statement of \cref{thm:cohomologicalObstruction} is satisfied.

    Moreover, for $d = 2$ the conditions (a) and (a\textprime) are equivalent.
  \end{lem}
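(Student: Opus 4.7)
I would work throughout with the mapping cylinder $C := \mathrm{cyl}(u) \simeq B\Gamma$, under which $H_k(v) \cong H_k(C, N)$, and read off each direct implication from the long exact sequence of the triple $N \subseteq M \subseteq C$,
\begin{equation*}
  H_{k+1}(C, M) \to H_k(M, N) \xrightarrow{(u, \id_N)_\ast} H_k(C, N) \to H_k(C, M).
\end{equation*}
The goal in each case is to force the leftmost connecting map to vanish for $2 \le k \le d$.

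Case (b) follows immediately from relative Hurewicz: $u$ is a $\pi_1$-iso and $\pi_k(M) = 0 = \pi_k(B\Gamma)$ for $2 \le k \le d$, making $u$ a $(d{+}1)$-connected map, so $H_j(C, M) = 0$ for $j \le d{+}1$. For (a\textprime), I would factor $(u, w)_\ast \colon H_k(M, N) \to H_k(B\Gamma, B\Lambda)$ through $H_k(v)$ via the pair inclusion $(M, N) \hookrightarrow (C, N)$ followed by the pair map $(p, w) \colon (C, N) \to (B\Gamma, B\Lambda)$, where $p$ is the cylinder retraction and commutativity rests on $v = u \circ \iota = j \circ w$; injectivity of the composite under (a\textprime) forces injectivity of the first factor. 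For (a), where $d = 2$, the $2$-connectedness of $u$ already gives $H_2(C, M) = 0$, so only the vanishing of the connecting map $H_3(C, M) \to H_2(M, N)$ remains to be checked. By asphericity of $C$, the pair boundary $\pi_3(C, M) \xrightarrow{\cong} \pi_2(M)$ is an isomorphism, and so the triple boundary $\pi_3(C, M) \to \pi_2(M, N)$ factors through $\pi_2(M) \to \pi_2(M, N)$, which is the zero map by the pair sequence of $(M, N)$ combined with assumption (a). Since $(C, M)$ is $2$-connected with $M$ path-connected, relative Hurewicz surjects $\pi_3(C, M) \twoheadrightarrow H_3(C, M)$, and naturality of Hurewicz with respect to the triple boundary then propagates the vanishing down to $H_3(C, M) \to H_2(M, N)$.

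The \enquote{Moreover} equivalence at $d = 2$ I would establish by comparing the two pair sequences. The LES of $(M, N)$ yields a short exact sequence
\begin{equation*}
  0 \to \mathrm{coker}\bigl(H_2(N) \to H_2(M)\bigr) \to H_2(M, N) \to \ker\bigl(H_1(N) \to H_1(M)\bigr) \to 0,
\end{equation*}
and analogously on the $(B\Gamma, B\Lambda)$-side; the right-hand kernels coincide via the $\pi_1$-isos $u, w$, and using that $w_\ast \colon H_2(N) \twoheadrightarrow H_2(B\Lambda)$ is surjective, the right-hand coker may be rewritten as $\mathrm{coker}(H_2(N) \to H_2(B\Gamma))$. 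The Hopf identification $\ker(H_2(M) \to H_2(B\Gamma)) = \mathrm{image}(\pi_2(M) \to H_2(M))$ then converts injectivity of the comparison map of cokers into the condition that this Hopf kernel be swallowed by $\iota_\ast(H_2(N))$, which is exactly what surjectivity of $\pi_2(N) \to \pi_2(M)$ enforces; a five-lemma on the short exact sequences then yields the equivalence of (a) with the $H_2$-part of (a\textprime). The $H_3$-surjectivity in (a\textprime) I would handle by extending the comparison one degree and using the analogous Hopf identification for $N$. The main obstacle is this $H_3$-bookkeeping: keeping track of the interplay between the Hurewicz images in degree two and the kernel behaviour in degree three while only the $\pi_2$-surjectivity assumption is available, and not stronger connectivity of $u$ or $w$.
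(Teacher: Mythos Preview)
Your arguments for (b) and (a\textprime) $\Rightarrow$ \labelcref{eq:weakHomologicalCond} are correct and essentially identical to the paper's. Your direct argument for (a) $\Rightarrow$ \labelcref{eq:weakHomologicalCond} via the vanishing of the triple boundary $H_3(C,M) \to H_2(M,N)$ is also correct and is a pleasant alternative to the paper's route, which instead first establishes (a) $\Leftrightarrow$ (a\textprime) and then invokes (a\textprime) $\Rightarrow$ \labelcref{eq:weakHomologicalCond}.

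The gap is in the \enquote{Moreover} equivalence, and it is twofold. You yourself flag the $H_3$-surjectivity in (a) $\Rightarrow$ (a\textprime) as an obstacle, and indeed your comparison of pair sequences does not produce it: from surjectivity of $\pi_2(N) \to \pi_2(M)$ alone there is no evident reason why $H_3(M,N) \to H_3(\Bfree\Gamma, \Bfree\Lambda)$ should surject, since neither $u$ nor $w$ is $3$-connected. There is also a subtler problem in the reverse direction (a\textprime) $\Rightarrow$ (a): your Hopf argument shows that injectivity of the cokernel comparison amounts to $\ker\bigl(u_\ast \colon H_2(M) \to H_2(\Bfree\Gamma)\bigr) \subseteq \iota_\ast(H_2(N))$, but this containment does not by itself force $\pi_2(N) \to \pi_2(M)$ to be surjective---it only says Hurewicz images in $H_2(M)$ lift to $H_2(N)$, which is weaker.

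The paper sidesteps both difficulties at once by forming the $3\times 3$ grid of homotopy cofibers of the commuting square $(N \to M,\ \Bfree\Lambda \to \Bfree\Gamma)$ and analyzing the total cofiber $C$. One always has $H_k(C) = 0$ for $k \leq 2$; then, reading off the bottom row, surjectivity of $\pi_2(N) \to \pi_2(M)$ is equivalent (via $\pi_3(w) \iso H_3(C_w)$ and $\pi_3(u) \iso H_3(C_u)$) to $H_3(C) = 0$, and reading off the right column, $H_3(C) = 0$ is equivalent to (a\textprime) for $d=2$. This packaging makes the $H_3$-bookkeeping that defeats your approach disappear entirely.
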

  \begin{proof}
  We first show that for $d = 2$, (a) and (a\textprime) are equivalent.
  Indeed, consider the following diagram of homotopy cofiber sequences:
    \begin{equation*}
         \begin{tikzcd}
        N \rar{\iota} \dar{w} & M \rar \dar{u} & C_\iota \dar \\
        \Bfree \Lambda \rar{j} \dar & \Bfree \Gamma \rar \dar & C_j \dar \\
        C_w \rar  & C_u \rar & C
      \end{tikzcd}
    \end{equation*}
    Since $w$ and $u$ are $2$-connected by construction, it follows by the Hurewicz theorem that $\homlgy_k(C_w) = \homlgy_k(C_u) = 0$ for $k = 1,2$ as well as $\homlgy_3(C_w) \iso \pi_3(w)$ and $\homlgy_3(C_u) \iso \pi_3(u)$.
    In particular, looking at the lower horizontal sequence in the diagram we see that we always have $\homlgy_k(C) = 0$ for $k = 1,2$.
    Moreover, since $\Bfree \Gamma$ and $\Bfree \Lambda$ are aspherical, we have $\pi_3(u) \iso \pi_2(M)$ and $\pi_3(w) \iso \pi_2(N)$.
    Thus surjectivity of $\pi_2(N) \to \pi_2(M)$ is equivalent to surjectivity of $\pi_3(w) \iso \homlgy_3(C_w) \to \homlgy_3(C_u) \iso \pi_3(u)$ which, in turn, is equivalent to $\homlgy_3(C) = 0$ since we always have $H_2(C_w) = 0$.
    Finally, turning to the right vertical sequence of the diagram, the vanishing of $H_3(C)$ is equivalent to (a\textprime) for $d = 2$ (since we have always $\homlgy_k(C) = 0$ for $k=1,2$).

      To see that (a\textprime) implies \labelcref{eq:weakHomologicalCond}, we just note that the map $\homlgy_k(M, N) \to \homlgy_k(\Bfree \Gamma, \Bfree \Lambda)$ factors as $\homlgy_k(M, N) \to \homlgy_k(v) \to \homlgy_k(\Bfree \Gamma, \Bfree \Lambda)$.

  To see that (b) implies \labelcref{eq:weakHomologicalCond}, consider the long exact sequence of the triple $N \hookrightarrow M \overset{u}{\to} \Bfree \Gamma$.
  \begin{equation*}
    \cdots \to \homlgy_{k+1}(u) \to \homlgy_k(M, N) \to \homlgy_k(v) \to \homlgy_k(u) \to \cdots
  \end{equation*}
  If $\pi_k(M) = 0$ for $2 \leq k \leq d$, then $u \colon M \to \Bfree \Gamma$ is $(d+1)$-connected and hence $\homlgy_k(u) = 0$ for $k \leq d+1$.
  In particular, $\homlgy_k(M, N) \to \homlgy_k(v)$ is even an isomorphism for $k \leq d$.
  \end{proof}

  Finally, \cref{thm:codim2IntersectionObstruction,thm:generalIntersectionObstruction} follow immediately now by combining cases (a) and (b) from \cref{thm:cohomologicalObstruction} (applied to $x = 1 \in \homlgy^0(\Bfree \Gamma)$) with cases (a) and (b) from \cref{thm:translatingHomotopyConditions}, respectively.


%
\subsubsection*{Acknowledgements}
This article was written during the author's doctoral studies at the University of Göttingen, where he was supported by the German Research Foundation (DFG) through the Research Training Group 1493 \enquote{Mathematical structures in modern quantum physics.}

  \printbibliography
  \Addresses
  \listoffixmes
\end{document}